\newtheorem{property}{Property}[section]
\newcommand{\RN}[1]{%
  \textup{\uppercase\expandafter{\romannumeral#1}}%
  }
 \def\theequation{\arabic{section}.\arabic{equation}}
 \title[Characterizations of Fractional Sobolev Spaces]{Characterizations of Fractional Sobolev Spaces From the Perspective of Riemann-Liouville Operators}
\author[Y. Li]{Yulong Li}
\address{Department of Mathematics and Statistics, University of Wyoming, Laramie, Wyoming, USA}
\email{yli25@uwyo.edu}
\date{\today}
\keywords{Riemann-Liouville fractional operators, weak fractional derivative, Fourier transform, regularity, decomposition, mixed derivative.}
\subjclass{26A06, 46N20, 26A33}
\begin{document}

 \begin{abstract}
Fractional Sobolev spaces $\widehat{H}^s(\mathbb{R})$ have been playing important roles in analysis of many mathematical subjects. In this work, we re-consider fractional Sobolev spaces under the perspective of fractional operators and establish characterizations on the Fourier transform of functions of fractional Sobolev spaces, thereby giving another equivalent definition.
 \end{abstract}
 \maketitle

\section{Introduction}
$\widehat{H}^s(\mathbb{R})$ has been exhibiting important usefulness in the study of theory of classical integer-order PDEs; and it is well known from the standard textbooks that fractional Sobolev spaces $\widehat{H}^s(\mathbb{R})$ could be defined in several ways, namely via Fourier transform, Gagliardo norm or interpolation spaces.

Our previous work (\cite{V.Ging&Y.Li}, \cite{Y.Li}) have suggested that usual fractional Sobolev spaces have been behaving new features in analysis of fraction-order differential equations due to the simultaneous appearing of left, right and mixed Riemann-Liouville derivatives. In this work we continue to explore usual fractional Sobolev spaces under the perspective of fractional calculus theory; and the main result in this work is Theorem~\ref{MAINTHEOREM}, which characterizes the Fourier transform of elements of $\widehat{H}^s(\mathbb{R})$ and thus gives anther equivalent definition of $\widehat{H}^s(\mathbb{R})$.
The material is organized as follows:
\begin{itemize}
\item Section~\ref{section:Notation} introduces the notations and conventions adopted throughout the material.
\item Section~\ref{section1} introduces the preliminary knowledge on fractional R-L operators.
\item Section~\ref{section2} outlines some existing results on the characterization of $\widehat{H}^s(\mathbb{R})$, which are obtained in~\cite{V.Ging&Y.Li} and will provide the whole abstract setting for the work.
\item Section~\ref{section3} establishes the main results.
\end{itemize}
\section{Notations}\label{section:Notation}
\begin{itemize}
\item All functions considered in this material are default to be real valued  unless otherwise specified.
\item $(f,g)$ and $ \int_\mathbb{R}fg$ shall be used interchangeably. Also, we denote integration $\int_A f$ on set $A$ without pointing out the variable unless it is necessary to specify.
\item  If $f,g\in L^2(\mathbb{R})$, $f=g$ means $f =g, a.e.$, unless stated otherwise.
\item $C_0^\infty(\mathbb{R})$ denotes the space of all infinitely differentiable functions with compact support in $\mathbb{R}$.
\item  $\mathcal{F}(u)$ denotes the Fourier transform of $u$ with specific expression defined in Definition~\ref{def:FT}, $ \widehat{u}$ denotes the Plancherel transform of $u$ defined in Theorem~\ref{thm:PAR}, which is well known that $ \widehat{u}$ is an isometry map from $L^2(\mathbb{R})$ onto $L^2(\mathbb{R})$ and coincides with $\mathcal{F}(u)$ if $u\in L^1(\mathbb{R})\cap L^2(\mathbb{R})$.
\item $u^\vee$ denotes  the inverse of Plancherel transform, and $*$ denotes convolution.
\end{itemize} 
\section{Preliminary}\label{section1} 
\subsection{Fractional Riemann-Liouville Integrals and Properties}
\begin{definition} \label{def:RLI}
Let $u:\mathbb{R} \rightarrow \mathbb{R}$ and  $\sigma >0$. The left and right Riemann-Liouville fractional integrals of order $\sigma$ are, formally respectively, defined as
\begin{align}
\boldsymbol{D}^{-\sigma}u(x)&:= \dfrac{1}{\Gamma(\sigma)}\int_{-\infty}^{x}(x-s)^{\sigma -1}u(s) \, {\rm d}s, \label{eq:LRLI}\\
\boldsymbol{D}^{-\sigma * }u(x) &:= \dfrac{1}{\Gamma(\sigma)}\int_{x}^{\infty}(s-x)^{\sigma-1}u(s) \, {\rm d}s,  \label{eq:RRLI}
\end{align}
where $\Gamma(\sigma)$ is the usual Gamma function. 
\end{definition}
\begin{property}[\cite{MR1347689}, p. 96]\label{prp:IntegrationExchange}
Given $0<\sigma$,
\begin{equation}
( \phi, \boldsymbol{D}^{-\sigma}\psi)=( \boldsymbol{D}^{-\sigma*}\phi, \psi),
\end{equation}
for $\phi\in L^p(\mathbb{R})$, $\psi \in L^q(\mathbb{R})$, $p>1, q>1$, $1/p+1/q=1+\sigma$.
\end{property}
\begin{property}[\cite{MR1347689}, Corollary 2.1]\label{semigroup-new}
Let $\mu,\sigma >0$, and $w\in C_0^\infty(\mathbb{R})$, then
\begin{equation}
\boldsymbol{D}^{-\mu} \boldsymbol{D}^{-\sigma} w=\boldsymbol{D}^{-(\mu+\sigma)}w  \quad \text{and} \quad \boldsymbol{D}^{-\mu*} \boldsymbol{D}^{-\sigma*}w = \boldsymbol{D}^{-(\mu+\sigma)*}w.
\end{equation}
\end{property}
\begin{property}[\cite{MR1347689}, pp. 95, 96]\label{pro:Translation}
Let $\mu>0$. Given $h\in \mathbb{R}$, define the translation operator $\tau_h$ as 
$\tau_h u(x) = u(x-h)$. Also, given $\kappa>0$, define the dilation operator
$\Pi_\kappa$ as $\Pi_\kappa u(x) = u(\kappa x)$.
Under the assumption that $\boldsymbol{D}^{-\mu}u$ and $\boldsymbol{D}^{-\mu*}u$ are well-defined, the following is true:
\begin{equation}
\begin{aligned}
\tau_h(\boldsymbol{D}^{-\mu}u)=\boldsymbol{D}^{-\mu}(\tau_hu)&,\quad \tau_h(\boldsymbol{D}^{-\mu*}u)=\boldsymbol{D}^{-\mu*}(\tau_hu)\\
\Pi_\kappa(\boldsymbol{D}^{-\mu}u)=\kappa^\mu \boldsymbol{D}^{-\mu}(\Pi_\kappa u)&,\quad
\Pi_\kappa(\boldsymbol{D}^{-\mu*}u)=\kappa^\mu \boldsymbol{D}^{-\mu*}(\Pi_\kappa u).
\end{aligned}
\end{equation}
\end{property}
\subsection{Fractional Riemann-Liouville Derivatives and Properties}
\begin{definition}\label{def:RLD}
Let $u:\mathbb{R} \rightarrow \mathbb{R}$. Assume $\mu >0$, $n$ is the smallest integer greater than $\mu$ (i.e., $n-1 \leq \mu < n$), and $\sigma = n- \mu$. The left and right Riemann-Liouville fractional derivatives of order $\mu$ are, formally respectively, defined as
\begin{align}
\boldsymbol{D}^{\mu} u &:= \dfrac{1}{\Gamma (\sigma)}\dfrac{{\rm d}^n}{{\rm d}x^n}\int_{-\infty}^{x}(x-s)^{\sigma-1}u(s) \, {\rm d}s, \label{5} \\
\boldsymbol{D}^{\mu*}u &:= \dfrac{(-1)^n}{\Gamma(\sigma)}\dfrac{{\rm d}^n}{{\rm d} x^n}\int_{x}^{\infty}(s-x)^{\sigma-1}u(s) \, {\rm d} s.\label{6}
\end{align}
\end{definition}
\begin{property}[\cite{V.Ging&Y.Li}] \label{prop:Boundedness}
Let $0<\mu $ and  $u\in C_0^\infty(\mathbb{R})$, then $\boldsymbol{D}^\mu u, \boldsymbol{D}^{\mu*} u \in L^p(\mathbb{R})$ for any $1\leq p<\infty$.
\end{property}
\begin{property}[ \cite{MR1347689}, p. 137] \label{lem:FTFD}
 Let $\mu > 0, u \in C_0^\infty(\mathbb{R})$, then
 \begin{equation} \label{eq:FTFDo}
 \mathcal{F}(\boldsymbol{D}^{\mu}u) = (2\pi i \xi)^{\mu} \mathcal{F}(u) \text{ and }
 \mathcal{F}(\boldsymbol{D}^{\mu*} u) = (-2\pi i \xi)^{\mu} \mathcal{F}(u), \quad \xi \ne 0, 
\end{equation}
where $\mathcal{F}(\cdot)$ is the Fourier Transform  as defined in Definition~\ref{def:FT} and the complex power functions are understood as $(\mp i\xi)^{\sigma}=|\xi|^\sigma e^{\mp  \sigma \pi i \cdot \emph{sign} (\xi)/2}$.
\end{property}
\begin{property}[\cite{V.Ging&Y.Li}]\label{pro:TranslationDerivative} Consider $\tau_h$ and $\Pi_\kappa$ defined in Property~\ref{pro:Translation}. Let $\mu>0$, $n-1\leq \mu<n$, where $n$ is a positive integer, then
\begin{equation}
\begin{aligned}
\tau_h(\boldsymbol{D}^{\mu}u)=\boldsymbol{D}^{\mu}(\tau_hu)&,\quad \tau_h(\boldsymbol{D}^{\mu*}u)=\boldsymbol{D}^{\mu*}(\tau_hu)\\
\Pi_\kappa(\boldsymbol{D}^{\mu}u)=\kappa^{-\mu}\boldsymbol{D}^{\mu}(\Pi_\kappa u)&,\quad
\Pi_\kappa(\boldsymbol{D}^{\mu*}u)=\kappa^{-\mu}\boldsymbol{D}^{\mu*}(\Pi_\kappa u).
\end{aligned}
\end{equation}
\end{property}
\section{Characterization of Sobolev Space $\widehat{H}^s(\mathbb{R})$}\label{section2}
In this section, we will list the necessary concepts and results developed in ~\cite{V.Ging&Y.Li}, which characterize the classical Sobolev space $\widehat{H}^s(\mathbb{R})$ defined in~\ref{thm:FTHsR}. This section will give us the theoretical framework in which the main results in Section~\ref{section3} will be established.
%
%
%
%
%
%
%
%
 %
 \begin{definition}[Weak Fractional Derivatives\cite{V.Ging&Y.Li}]\label{def:WFD}
 Let $\mu>0$, and $u, w \in L^1_{loc}(\mathbb{R})$.  The function $w$ is called weak
 $\mu$-order left fractional derivative of $u$, written as  $\boldsymbol{D}^\mu u = w$,
 provided
 \begin{equation}
(u, \boldsymbol{D}^{\mu* } \psi) = (w, \psi)~\forall \psi \in C_0^\infty(\mathbb{R}).
 \end{equation}
In a similar faschion, $w$ is weak $\mu$-order right fractional derivative of $u$, written as 
$\boldsymbol{D}^{\mu*} u=w$,  provided
 \begin{equation}
 (u, \boldsymbol{D}^{\mu}\psi) = (w, \psi) ~\forall \psi \in C_0^\infty(\mathbb{R}).
 \end{equation}
 \end{definition}
   \begin{definition}[\cite{V.Ging&Y.Li}]\label{def:FractionalSobolevSpaces}
 Let $s\geq 0$. Define spaces
 \begin{equation}
  \widetilde{W}^{s}_L(\mathbb{R})=\{u\in L^2(\mathbb{R}), \boldsymbol{D}^s u \in L^2(\mathbb{R})\}, 
 \end{equation}
 \begin{equation}
  \widetilde{W}^{s}_R(\mathbb{R})=\{u\in L^2(\mathbb{R}), \boldsymbol{D}^{s*} u\in L^2(\mathbb{R})\}, 
 \end{equation}
where $\boldsymbol{D}^s u$ and $\boldsymbol{D}^{s*} u$ are in the weak fractional derivative sense as defined in Definition~\ref{def:WFD}. A semi-norm
\begin{equation}
|u|_L:= \|\boldsymbol{D}^s u\|_{L^2(\mathbb{R})} ~~\text{for}~~\widetilde{W}^{s}_L(\mathbb{R}) ~~\text{and}~~ |u|_R:= \|\boldsymbol{D}^{s*} u\|_{L^2(\mathbb{R})}
~~\text{for}~~\widetilde{W}^{s}_R(\mathbb{R}),
\end{equation}
is given with the corresponding norm

\begin{equation}
\quad \|u\|_{\star}:=(\|u\|^2_{L^2(\mathbb{R})}+|u|_\star^2)^{1/2},~~\star=L,R.
\end{equation}
 \end{definition}
\begin{remark}\label{rem:Notations}
Notice the special case   $\widetilde{W}^{0}_L(\mathbb{R})=\widetilde{W}^{0}_R(\mathbb{R})
=\widehat{H}^0(\mathbb{R})=L^2(\mathbb{R})$. 
\end{remark}
\begin{property}[Uniqueness of Weak Fractional R-L Derivatives \cite{V.Ging&Y.Li}]\label{uniquenessofR-L}
If $v\in L^1_{loc}(\mathbb{R})$ has a weak $s$-order left (or right) fractional derivative, then it is unique up to a set of zero measure.
\end{property}
Now we have the following characterization of  Sobolev space $\widehat{H}^s(\mathbb{R})$.
\begin{theorem}[\cite{V.Ging&Y.Li}]\label{thm:EquivalenceOfSpaces}
Given $s\geq 0$, $\widetilde{W}^{s}_L(\mathbb{R})$, $\widetilde{W}^{s}_R(\mathbb{R})$ and $\widehat{H}^s(\mathbb{R})$ are identical spaces with equal norms and semi-norms.
\end{theorem}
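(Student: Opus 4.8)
The plan is to pass everything through the Plancherel transform and reduce the three space equalities to a single pointwise identity on the Fourier side, namely that the multipliers $(2\pi i\xi)^s$, $(-2\pi i\xi)^s$ and $(1+|\xi|^2)^{s/2}$ (equivalently $|2\pi\xi|^s$, up to the harmless $L^2$-comparable $1+|\xi|^2$ weight) all have the same modulus $|2\pi\xi|^s$. The first step is to establish the Fourier characterization of the weak derivative: if $u\in L^2(\mathbb{R})$ and $\boldsymbol{D}^s u = w$ in the weak sense of Definition~\ref{def:WFD}, then $\widehat{w}(\xi) = (2\pi i\xi)^s\,\widehat{u}(\xi)$ a.e., and conversely. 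To get this, take $\psi\in C_0^\infty(\mathbb{R})$, apply Parseval to $(u,\boldsymbol{D}^{s*}\psi) = (w,\psi)$, and use Property~\ref{lem:FTFD} to rewrite $\mathcal{F}(\boldsymbol{D}^{s*}\psi) = (-2\pi i\xi)^s\mathcal{F}(\psi)$; since $\overline{(-2\pi i\xi)^s} = (2\pi i\xi)^s$ (the power being defined via $|\xi|^s e^{\mp s\pi i\,\mathrm{sign}(\xi)/2}$, and $\psi$ real-valued so $\mathcal{F}(\psi)(-\xi) = \overline{\mathcal{F}(\psi)(\xi)}$), this yields $\int \widehat{w}\,\overline{\widehat{\psi}} = \int (2\pi i\xi)^s\widehat{u}\,\overline{\widehat{\psi}}$ for all such $\psi$; density of $\{\widehat\psi : \psi\in C_0^\infty\}$ in $L^2$ then forces the pointwise identity. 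The analogous computation handles $\boldsymbol{D}^{s*}u = w$, giving $\widehat{w}(\xi) = (-2\pi i\xi)^s\widehat{u}(\xi)$.

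The second step is the bookkeeping. Define $\widehat{H}^s(\mathbb{R})$ as in Theorem~\ref{thm:FTHsR}, i.e.\ $u\in L^2$ with $(1+|\xi|^2)^{s/2}\widehat{u}\in L^2$. Given the identities from Step 1, membership of $u$ in $\widetilde{W}^s_L(\mathbb{R})$ is equivalent to $(2\pi i\xi)^s\widehat{u}\in L^2$, i.e.\ $|\xi|^s\widehat{u}\in L^2$ (since $|(2\pi i\xi)^s| = (2\pi|\xi|)^s$); membership in $\widetilde{W}^s_R(\mathbb{R})$ gives the same condition by the same modulus computation; and membership in $\widehat{H}^s(\mathbb{R})$ is equivalent to $|\xi|^s\widehat{u}\in L^2$ because $(1+|\xi|^2)^{s/2}$ and $1 + |\xi|^s$ are comparable. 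Hence all three spaces coincide as sets. For the norms and seminorms: by Plancherel, $|u|_L^2 = \|\boldsymbol{D}^s u\|_{L^2}^2 = \int |(2\pi i\xi)^s|^2|\widehat{u}|^2 = \int (2\pi|\xi|)^{2s}|\widehat{u}|^2 = |u|_R^2$, so the two seminorms are literally equal, and $\|u\|_L^2 = \|u\|_{L^2}^2 + |u|_L^2 = \int (1 + (2\pi|\xi|)^{2s})|\widehat{u}|^2$, which matches $\|u\|_R$; whether this also matches $\|\cdot\|_{\widehat{H}^s}$ verbatim depends on the normalization chosen in Theorem~\ref{thm:FTHsR} (if that theorem's norm is $\big(\int (1+|2\pi\xi|^2)^s|\widehat u|^2\big)^{1/2}$ the match need not be on the nose, only equivalent), so I would state it with whatever convention makes $(1+|\xi|^2)^{s/2}$ read as $(1+(2\pi|\xi|)^{2s})^{1/2}$, or else note the equivalence constant explicitly.

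The step I expect to be the main obstacle is the rigorous justification in Step 1 that the weak-derivative testing identity transfers cleanly to the Fourier side for \emph{all} of $L^2$, not just for nice $u$. The subtlety is that $\boldsymbol{D}^{s*}\psi$ for $\psi\in C_0^\infty$ is not compactly supported and decays only polynomially (like $|x|^{-s-1}$ at infinity when $s$ is non-integer), so one must check $\boldsymbol{D}^{s*}\psi\in L^2$ to even write Parseval — this is exactly Property~\ref{prop:Boundedness}, so it is available — and then confirm that its Plancherel transform genuinely equals the classical Fourier transform formula of Property~\ref{lem:FTFD}, which holds because $\boldsymbol{D}^{s*}\psi\in L^1\cap L^2$ (again from Property~\ref{prop:Boundedness}) so the two transforms coincide. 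Once that technical point is secured, the density argument $\overline{\{\mathcal F(\psi):\psi\in C_0^\infty\}} = L^2$ and the elimination of the test function are routine. A secondary care point is the branch of the complex power at $\xi = 0$, but this is a single point of measure zero and does not affect any $L^2$ statement.
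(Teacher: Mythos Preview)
The paper does not supply its own proof of this theorem: it is quoted verbatim from \cite{V.Ging&Y.Li} as background in Section~\ref{section2}, and later (in the proof of Lemma~\ref{LastLemma}) the paper explicitly defers the key identity $\widehat{\boldsymbol{D}^s v}(\xi)=(2\pi i\xi)^s\widehat v(\xi)$ to ``the proof of Theorem~3.3'' of that reference. So there is nothing in the present paper to compare your argument against beyond that pointer; your Fourier-multiplier approach is exactly the one the paper is invoking by citation.

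Two small corrections to your bookkeeping. First, with the normalization actually used in Definition~\ref{thm:FTHsR} the seminorm is $|u|_{\widehat H^s}=\||2\pi\xi|^s\widehat u\|_{L^2}$ and the norm is $(\|u\|_{L^2}^2+|u|_{\widehat H^s}^2)^{1/2}$, so your computation gives $|u|_L=|u|_R=|u|_{\widehat H^s}$ and $\|u\|_L=\|u\|_R=\|u\|_{\widehat H^s}$ \emph{on the nose}; no equivalence constant is needed, and the hedge about $(1+|\xi|^2)^{s/2}$ is unnecessary here. Second, the obstacle you flag is slightly misidentified. That $\boldsymbol{D}^{s*}\psi\in L^1\cap L^2$ and hence that Parseval and Property~\ref{lem:FTFD} apply is, as you say, already secured by Property~\ref{prop:Boundedness}. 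The genuinely delicate step is the \emph{conclusion} of Step~1: from
\[
\int_{\mathbb R}\bigl((2\pi i\xi)^s\widehat u(\xi)-\widehat w(\xi)\bigr)\overline{\widehat\psi(\xi)}\,d\xi=0\qquad\forall\,\psi\in C_0^\infty(\mathbb R)
\]
you cannot invoke $L^2$-density of $\{\widehat\psi\}$ directly, because $(2\pi i\xi)^s\widehat u-\widehat w$ is only known a priori to lie in $L^2_{\mathrm{loc}}$, not $L^2$. The standard fix is to use that $\{\widehat\psi:\psi\in C_0^\infty\}$ is dense in the Schwartz class $\mathcal S$ (since $C_0^\infty$ is dense in $\mathcal S$ and $\mathcal F$ is a homeomorphism of $\mathcal S$), approximate an arbitrary $\phi\in C_0^\infty$ on the $\xi$-side by such $\widehat{\psi_n}$ in $\mathcal S$, and pass to the limit using $\widehat u\in L^2$ together with rapid decay of $(2\pi i\xi)^s\overline{\widehat{\psi_n}}$. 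With that adjustment your argument is complete.
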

\begin{corollary}[\cite{V.Ging&Y.Li}]\label{cor:DensityOfC}
$u\in \widehat{H}^{s}(\mathbb{R})$ if and only if there exits a  sequence $\{u_n\}\subset C_0^\infty(\mathbb{R})$ such that $\{u_n\}, \{\boldsymbol{D}^su_n\}$ are Cauchy sequences in $L^2(\mathbb{R})$, with $\lim_{n\rightarrow \infty}u_n=u$. As a consequence, we have  $\lim_{n\rightarrow \infty}\boldsymbol{D}^{s}u_n=\boldsymbol{D}^{s}u$.
\\
Likewise,
\\
$u\in \widehat{H}^{s}(\mathbb{R}) $ if and only if there exits a  sequence $\{u_n\}\subset C_0^\infty(\mathbb{R})$ such that $\{u_n\}, \{\boldsymbol{D}^{s*}u_n\}$ are Cauchy sequences in $L^2(\mathbb{R})$, with $\lim_{n\rightarrow \infty}u_n=u$. As a consequence, we have $\lim_{n\rightarrow\infty}\boldsymbol{D}^{s*}u_n=\boldsymbol{D}^{s*}u$.
\end{corollary}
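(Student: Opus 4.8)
The plan is to derive Corollary~\ref{cor:DensityOfC} from Theorem~\ref{thm:EquivalenceOfSpaces} --- specifically from the fact stated there that $\widetilde{W}^{s}_L(\mathbb{R})$, $\widetilde{W}^{s}_R(\mathbb{R})$ and $\widehat{H}^s(\mathbb{R})$ carry \emph{equal} norms --- combined with the classical fact that $C_0^\infty(\mathbb{R})$ is dense in $\widehat{H}^s(\mathbb{R})$ for the Fourier-defined norm of~\ref{thm:FTHsR}. I would argue the assertion for the left derivative $\boldsymbol{D}^s$; the right-derivative assertion then follows word for word after the reflection $x\mapsto -x$, which interchanges $\boldsymbol{D}^s$ and $\boldsymbol{D}^{s*}$ and preserves $L^2(\mathbb{R})$.

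\emph{Sufficiency.} For this direction I would take $\{u_n\}\subset C_0^\infty(\mathbb{R})$ as in the hypothesis, with $\{u_n\}$ and $\{\boldsymbol{D}^s u_n\}$ Cauchy in $L^2(\mathbb{R})$ and $u_n\to u$ in $L^2(\mathbb{R})$, and set $w:=\lim_n\boldsymbol{D}^s u_n\in L^2(\mathbb{R})$. Fixing $\psi\in C_0^\infty(\mathbb{R})$, Property~\ref{prop:Boundedness} gives $\boldsymbol{D}^{s*}\psi\in L^2(\mathbb{R})$. For a function $u_n\in C_0^\infty(\mathbb{R})$ its classical Riemann--Liouville derivative $\boldsymbol{D}^s u_n$ is also its weak fractional derivative in the sense of Definition~\ref{def:WFD}, i.e. $(u_n,\boldsymbol{D}^{s*}\psi)=(\boldsymbol{D}^s u_n,\psi)$; this is the fractional integration-by-parts identity, obtained by writing $\boldsymbol{D}^s=\frac{d^n}{dx^n}\boldsymbol{D}^{-(n-s)}$, integrating the $n$-th order derivative by parts, and invoking Property~\ref{prp:IntegrationExchange}. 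Passing to the limit $n\to\infty$ --- legitimate since $u_n\to u$, $\boldsymbol{D}^s u_n\to w$ in $L^2(\mathbb{R})$ and $\boldsymbol{D}^{s*}\psi,\psi\in L^2(\mathbb{R})$ --- yields $(u,\boldsymbol{D}^{s*}\psi)=(w,\psi)$ for every $\psi\in C_0^\infty(\mathbb{R})$. Hence $\boldsymbol{D}^s u=w$ weakly, so $u\in\widetilde{W}^{s}_L(\mathbb{R})=\widehat{H}^s(\mathbb{R})$ by Theorem~\ref{thm:EquivalenceOfSpaces}, and Property~\ref{uniquenessofR-L} makes the identification $\boldsymbol{D}^s u=w=\lim_n\boldsymbol{D}^s u_n$ unambiguous, which is the stated consequence.

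\emph{Necessity.} Given $u\in\widehat{H}^s(\mathbb{R})$, I would invoke density of $C_0^\infty(\mathbb{R})$ in $\widehat{H}^s(\mathbb{R})$ for its norm $\|v\|_{\widehat{H}^s}=\bigl\|(1+|\xi|^2)^{s/2}\widehat{v}\bigr\|_{L^2(\mathbb{R})}$ (mollify and truncate; on the Fourier side this reduces to an approximate-identity argument in which Peetre's inequality absorbs the weight $(1+|\xi|^2)^{s/2}$) to choose $\{u_n\}\subset C_0^\infty(\mathbb{R})$ with $u_n\to u$ in $\widehat{H}^s(\mathbb{R})$; in particular $u_n\to u$ in $L^2(\mathbb{R})$. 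By the equality of norms in Theorem~\ref{thm:EquivalenceOfSpaces}, $\|u_n-u_m\|_L=\|u_n-u_m\|_{\widehat{H}^s}\to 0$ as $n,m\to\infty$, so $\{u_n\}$ is Cauchy for the $\widetilde{W}^{s}_L(\mathbb{R})$-norm; by Definition~\ref{def:FractionalSobolevSpaces} this is exactly the statement that $\{u_n\}$ and $\{\boldsymbol{D}^s u_n\}$ are both Cauchy in $L^2(\mathbb{R})$. The consequence $\lim_n\boldsymbol{D}^s u_n=\boldsymbol{D}^s u$ is then the content of the sufficiency direction.

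\emph{Where the work sits.} Modulo this layout the only genuine input is the classical density of $C_0^\infty(\mathbb{R})$ in $\widehat{H}^s(\mathbb{R})$; everything else is bookkeeping around Theorem~\ref{thm:EquivalenceOfSpaces}. If one prefers to keep the whole argument inside the Riemann--Liouville framework and not borrow that fact, the delicate step becomes the spatial truncation: mollification is harmless because $\boldsymbol{D}^s$ commutes with convolution against a fixed smooth kernel (Property~\ref{pro:TranslationDerivative}), but since $\boldsymbol{D}^s$ is nonlocal and admits no finite Leibniz rule, multiplying a mollified $u$ by a cut-off $\eta(\cdot/R)$ forces one to estimate $\bigl\|\boldsymbol{D}^s\bigl((1-\eta(\cdot/R))(u*\rho_\varepsilon)\bigr)\bigr\|_{L^2(\mathbb{R})}$ directly and show it tends to $0$ as $R\to\infty$. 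I expect that truncation estimate, rather than any of the limiting passages, to be the main obstacle.
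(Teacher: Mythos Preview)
The paper does not supply its own proof of this corollary; it is merely quoted from~\cite{V.Ging&Y.Li} as background for Section~\ref{section3}, so there is nothing in the present paper to compare your argument against. That said, your proof is correct and is essentially the natural way to derive the statement from Theorem~\ref{thm:EquivalenceOfSpaces}: the sufficiency step is a clean limit passage in the defining identity of Definition~\ref{def:WFD}, and the necessity step reduces, as you identify, to the classical density of $C_0^\infty(\mathbb{R})$ in $\widehat{H}^s(\mathbb{R})$. One cosmetic point: the norm you write, $\bigl\|(1+|\xi|^2)^{s/2}\widehat{v}\bigr\|_{L^2}$, is equivalent but not identical to the paper's (Definition~\ref{thm:FTHsR}); since Theorem~\ref{thm:EquivalenceOfSpaces} asserts \emph{equality} of norms rather than mere equivalence, you should phrase the Cauchy-sequence conclusion using the paper's own $\|\cdot\|_{\widehat{H}^s}$ to keep the bookkeeping literally correct.
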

\section{Main Results} \label{section3}
Let us keep in mind that throughout the rest of paper, fractional derivatives are always understood in the weak sense defined in Section~\ref{section2}. The  main result in this work is the following.
\begin{theorem}\label{MAINTHEOREM}
Given $s\geq 0$. Let $f(\xi)$ be a function of form $\prod_{i=1}^n (1+\chi(s_i) \cdot(\pm2\pi  \xi i)^{s_i})$, where 
$0\leq s_i$ $ (i=1,\cdots, n)$, $\Sigma_{i=1}^n s_i\leq s$ and $\chi(s) =
\begin{cases}
1, \text{ if } \frac{s}{2} \in (0,\frac{1}{2}] \text{ or } \big[\frac{3}{2}+2k, \frac{5}{2}+2k \big], k \in \mathbb{N},\\
-1, \text{ if } \frac{s}{2}\in \big( \frac{1}{2}+2k, \frac{3}{2}+2k \big), ~ k \in \mathbb{N}.
\end{cases}$
Denote $\tau=s-\Sigma_{i=1}^n s_i$.

 Then the following is true:

 $v(x)\in \widehat{H}^s(\mathbb{R})$ if and only if there exists a $ u(x) \in \widehat{H}^\tau(\mathbb{R})$  such that $f(\xi)\cdot\widehat{v}(\xi)=\widehat{u}(\xi)$.
\end{theorem}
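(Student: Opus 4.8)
The plan is to reduce everything to the Fourier/Plancherel side, where the statement becomes a factorization claim about multipliers. First I would record the Fourier-side characterization of $\widehat H^s(\mathbb R)$ underlying Theorem~\ref{thm:EquivalenceOfSpaces}: $v\in\widehat H^s(\mathbb R)$ iff $(1+|\xi|^s)\widehat v(\xi)\in L^2(\mathbb R)$, and more to the point, that the two semi-norms $\|\boldsymbol D^s v\|_{L^2}$ and $\|\boldsymbol D^{s*}v\|_{L^2}$ equal $\||2\pi\xi|^s\widehat v\|_{L^2}$ (by Property~\ref{lem:FTFD} extended to $\widehat H^s$ via the density statement in Corollary~\ref{cor:DensityOfC}). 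The key arithmetic observation is that, on the Fourier side, the operator $I+\chi(s_i)\boldsymbol D^{s_i}$ (resp. its right analogue) has symbol $1+\chi(s_i)(\pm2\pi i\xi)^{s_i}$, and the role of the sign $\chi(s_i)$ is exactly to guarantee that this symbol is \emph{bounded away from zero} uniformly in $\xi$: since $(\pm2\pi i\xi)^{s_i}=|2\pi\xi|^{s_i}e^{\pm s_i\pi i\,\mathrm{sign}(\xi)/2}$, the quantity $\chi(s_i)(\pm2\pi i\xi)^{s_i}$ never points into the closed half-plane $\{\mathrm{Re}\le -1/(2n)\}$ for the admissible ranges of $s_i/2$ listed in the definition of $\chi$. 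I would verify this by a direct case check on $s_i/2\in(0,1/2]$ and $s_i/2\in[3/2+2k,5/2+2k]$ (where $\cos(s_i\pi/2)\ge 0$ or the relevant real part is controlled) versus $s_i/2\in(1/2+2k,3/2+2k)$ (where the sign must flip); this yields $|f(\xi)|\ge c>0$ and $|f(\xi)|\le C(1+|\xi|^{\Sigma s_i})$ with matching two-sided comparability $|f(\xi)|\asymp 1+|\xi|^{\Sigma s_i}$.

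With the multiplier $f$ in hand, the equivalence is essentially bookkeeping. For the ``if'' direction, suppose $u\in\widehat H^\tau(\mathbb R)$ and $\widehat v=f^{-1}\widehat u$; then $(1+|\xi|^s)\widehat v=(1+|\xi|^s)f(\xi)^{-1}\widehat u$, and because $|f(\xi)|^{-1}\asymp (1+|\xi|^{\Sigma s_i})^{-1}$ while $1+|\xi|^s\le C(1+|\xi|^\tau)(1+|\xi|^{\Sigma s_i})$ (here $\tau=s-\Sigma s_i\ge 0$ is used), we get $(1+|\xi|^s)\widehat v\in L^2$, i.e.\ $v\in\widehat H^s(\mathbb R)$. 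Conversely, given $v\in\widehat H^s(\mathbb R)$, define $\widehat u:=f(\xi)\widehat v(\xi)$; then $\widehat u\in L^2$ since $|f|\le C(1+|\xi|^s)$ (as $\Sigma s_i\le s$), and $(1+|\xi|^\tau)\widehat u=(1+|\xi|^\tau)f(\xi)\widehat v$ is in $L^2$ because $(1+|\xi|^\tau)|f(\xi)|\le C(1+|\xi|^\tau)(1+|\xi|^{\Sigma s_i})\le C'(1+|\xi|^s)$, so $u\in\widehat H^\tau(\mathbb R)$. Throughout I would phrase the definition of $u$ via the inverse Plancherel transform, $u:=(f\widehat v)^\vee$, which is legitimate once we know $f\widehat v\in L^2$.

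I would also like to make the operator-theoretic meaning explicit (and this is the part that connects to the fractional-calculus framework of Section~\ref{section2}): writing $f(\xi)=\prod_i(1+\chi(s_i)(\pm2\pi i\xi)^{s_i})$ as an ordered product of symbols of $I+\chi(s_i)\boldsymbol D^{s_i}$ (or $I+\chi(s_i)\boldsymbol D^{s_i*}$ according to the chosen sign), the identity $f\widehat v=\widehat u$ says precisely that $u=\prod_i(I+\chi(s_i)\boldsymbol D^{(*)s_i})\,v$ in the weak sense, with the composition justified by Property~\ref{semigroup-new}, Property~\ref{lem:FTFD}, and the density of $C_0^\infty$ from Corollary~\ref{cor:DensityOfC}. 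The main obstacle, and the only genuinely non-routine point, is the uniform lower bound $|f(\xi)|\ge c>0$: one must check that the specific $2$-periodic-in-$s/2$ sign pattern encoded by $\chi$ is exactly the one that keeps each factor $1+\chi(s_i)|2\pi\xi|^{s_i}e^{\pm s_i\pi i\,\mathrm{sign}(\xi)/2}$ off the negative real axis (and away from $0$), so that no cancellation can occur in the product; everything after that is continuity of the $L^2$ scale under Fourier multipliers.
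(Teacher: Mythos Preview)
Your argument is correct, but it takes a genuinely different route from the paper. The paper proves Theorem~\ref{MAINTHEOREM} by composition of bijections: it first establishes, via the Riemann--Liouville framework, that each operator $v\mapsto v+\chi(s_i)\boldsymbol{D}^{s_i}v$ (or its right analogue) is a bijection $\widehat H^{s_i+t}(\mathbb R)\to\widehat H^{t}(\mathbb R)$. This is done through Lemma~\ref{Lemma1-new} (density of $\{\psi\pm\boldsymbol D^{s}\psi:\psi\in C_0^\infty\}$ in $L^2$, via a convolution argument), Lemma~\ref{normestimate-New} (the identity $\|\psi\pm\boldsymbol D^{s}\psi\|_{L^2}^2=\|\psi\|_{L^2}^2\pm 2\cos(\tfrac{s\pi}{2})\|\boldsymbol D^{s/2}\psi\|_{L^2}^2+\|\boldsymbol D^s\psi\|_{L^2}^2$), and Lemma~\ref{Existence-new} (surjectivity and injectivity from the previous two, plus a regularity-shifting statement). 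Only at the very end (Lemma~\ref{LastLemma}) is the Plancherel transform applied to translate this into the multiplier identity, and the theorem follows by composing $T_1,\dots,T_n$.

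You instead bypass the operator theory entirely and work on the Fourier side from the start: the crucial point---that $\chi(s_i)$ is chosen so that $\chi(s_i)\cos(s_i\pi/2)\ge 0$, whence $|1+\chi(s_i)(\pm 2\pi i\xi)^{s_i}|^2=1+2\chi(s_i)\cos(\tfrac{s_i\pi}{2})|2\pi\xi|^{s_i}+|2\pi\xi|^{2s_i}\ge \max(1,|2\pi\xi|^{2s_i})$---yields the two-sided bound $|f(\xi)|\asymp 1+|\xi|^{\sum s_i}$ directly, after which both implications reduce to the elementary comparability $(1+|\xi|^\tau)(1+|\xi|^{\sum s_i})\asymp 1+|\xi|^s$. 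Your approach is shorter and more elementary; the paper's approach is longer but is the point of the paper, since its aim is precisely to recover $\widehat H^s$ from the Riemann--Liouville side rather than from multiplier estimates. Note that the paper's Lemma~\ref{normestimate-New} is exactly your modulus computation written back in $L^2$ via Plancherel, so the two arguments hinge on the same identity, just deployed in opposite directions. One small wording issue: your phrase ``never points into the half-plane $\{\mathrm{Re}\le -1/(2n)\}$'' is looser than what you actually need and use; the clean statement is $\mathrm{Re}\bigl(\chi(s_i)(\pm 2\pi i\xi)^{s_i}\bigr)\ge 0$, which gives each factor modulus $\ge 1$ and $\ge |2\pi\xi|^{s_i}$ simultaneously.
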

\begin{remark}
As convention, the complex power functions are understood as $(\mp i\xi)^{\sigma}=|\xi|^\sigma e^{\mp  \sigma \pi i \cdot \text{sign} (\xi)/2}$.
\end{remark}
Before embarking on the rigorous proof of the theorem above, let us first establish several lemmas which are of crucial importance later.

\begin{lemma}\label{Lemma1-new}
Fix $s\geq 0$. The following sets  are dense in $L^{2}(\mathbb{R})$ respectively:
\begin{itemize}
\item $M_1=\{w: w=\psi+\boldsymbol{D}^{s}\psi, ~\forall \psi\in C_0^\infty(\mathbb{R})\}$
\item $M_2=\{w: w=\psi-\boldsymbol{D}^{s}\psi, ~\forall \psi\in C_0^\infty(\mathbb{R})\}$
\item $M_3=\{w: w=\psi+\boldsymbol{D}^{s*}\psi, ~\forall \psi\in C_0^\infty(\mathbb{R})\}$
\item $M_4=\{w: w=\psi-\boldsymbol{D}^{s*}\psi, ~\forall \psi\in C_0^\infty(\mathbb{R})\}$
\end{itemize}
\end{lemma}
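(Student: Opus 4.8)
The plan is to prove density of each $M_i$ in $L^2(\mathbb{R})$ by a duality argument: a subspace is dense in the Hilbert space $L^2(\mathbb{R})$ if and only if its orthogonal complement is trivial. So I would fix $i$, suppose $g \in L^2(\mathbb{R})$ satisfies $(g, w) = 0$ for every $w \in M_i$, and aim to conclude $g = 0$. Taking $M_1$ as the model case, the hypothesis reads $(g, \psi + \boldsymbol{D}^s\psi) = 0$ for all $\psi \in C_0^\infty(\mathbb{R})$, i.e. $(g, \psi) + (g, \boldsymbol{D}^s\psi) = 0$. Recognizing the second pairing through the definition of the weak right fractional derivative (Definition~\ref{def:WFD}), this says precisely that $g$ has a weak $s$-order right fractional derivative equal to $-g$, namely $\boldsymbol{D}^{s*}g = -g$ in the weak sense, with $g \in L^2(\mathbb{R})$. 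Hence $g \in \widetilde{W}^s_R(\mathbb{R}) = \widehat{H}^s(\mathbb{R})$ by Theorem~\ref{thm:EquivalenceOfSpaces}, so I can pass to the Fourier (Plancherel) side.

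On the Fourier side, I would use the description of $\widehat{H}^s(\mathbb{R})$ together with how the weak right derivative acts: for $g \in \widehat{H}^s(\mathbb{R})$ one has $\widehat{\boldsymbol{D}^{s*}g}(\xi) = (-2\pi i \xi)^s \widehat{g}(\xi)$ (this follows from Property~\ref{lem:FTFD} for $C_0^\infty$ functions together with the density/closure statement in Corollary~\ref{cor:DensityOfC}, approximating $g$ by $u_n \in C_0^\infty(\mathbb{R})$ so that $\widehat{u_n} \to \widehat{g}$ and $(-2\pi i\xi)^s\widehat{u_n} = \widehat{\boldsymbol{D}^{s*}u_n} \to \widehat{\boldsymbol{D}^{s*}g}$ in $L^2$). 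The relation $\boldsymbol{D}^{s*}g = -g$ then becomes the pointwise (a.e.) identity $(-2\pi i\xi)^s \widehat{g}(\xi) = -\widehat{g}(\xi)$, i.e. $\big(1 + (-2\pi i \xi)^s\big)\widehat{g}(\xi) = 0$ a.e. Using the convention $(-i\xi)^s = |\xi|^s e^{s\pi i \,\mathrm{sign}(\xi)/2}$, the factor $1 + (-2\pi i\xi)^s = 1 + (2\pi|\xi|)^s e^{s\pi i\,\mathrm{sign}(\xi)/2}$ vanishes only when $(2\pi|\xi|)^s e^{s\pi i\,\mathrm{sign}(\xi)/2} = -1$; for $\xi \neq 0$ this forces $(2\pi|\xi|)^s = 1$ and $e^{s\pi i\,\mathrm{sign}(\xi)/2} = -1$, which can hold on at most the two points $\xi = \pm \tfrac{1}{2\pi}$ (and only for special $s$), hence on a set of measure zero. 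Therefore $\widehat{g}(\xi) = 0$ a.e., so $g = 0$, proving $M_1$ is dense. The sign change to $M_2$ replaces the factor by $1 - (-2\pi i\xi)^s$, which by the same argument vanishes only on a null set; and $M_3, M_4$ are handled identically using the \emph{left} weak derivative, the pairing with $\boldsymbol{D}^s\psi$ now identifying $g$ as having weak left derivative $\mp g$ and giving the Fourier multiplier $1 \pm (2\pi i\xi)^s$, again nonzero a.e.

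The only genuinely delicate point is the justification that the weak relation $\boldsymbol{D}^{s*}g = -g$ transfers to the clean Fourier multiplier identity $(-2\pi i\xi)^s\widehat{g} = -\widehat{g}$ a.e. — i.e. that the Fourier symbol of the weak fractional derivative really is $(-2\pi i\xi)^s$ for every element of $\widehat{H}^s(\mathbb{R})$, not just for test functions. This is exactly what the combination of Theorem~\ref{thm:EquivalenceOfSpaces} and Corollary~\ref{cor:DensityOfC} buys us, so I would spell out the approximation argument carefully there and then treat the $\xi \neq 0$ zero-set analysis of $1 \pm (\pm 2\pi i\xi)^s$ as a short elementary computation. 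Everything else — the duality reformulation, reading off the weak-derivative identity from orthogonality, and concluding $\widehat{g} = 0$ — is routine. One should also note the trivial base case $s = 0$, where $M_1 = M_3 = \{2\psi\}$ and $M_2 = M_4 = \{0\}$... wait, that is wrong for $M_2$; in fact for $s=0$ the multiplier $1 - 1 = 0$ identically, so the argument as stated only shows $M_2$ is dense when the zero set of $1 - (\pm 2\pi i\xi)^0$ is null, which fails. Hence I would either restrict to $s > 0$ in the degenerate directions or observe that $M_2 = M_4 = \{0\}$ is \emph{not} dense at $s = 0$; presumably the intended reading is $s > 0$, consistent with $\boldsymbol{D}^s$ being a genuine fractional derivative, and I would add that hypothesis or remark to the statement.
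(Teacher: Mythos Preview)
Your argument is correct (with the $s=0$ caveat you already flagged, which the paper glosses over too), but it follows a genuinely different route from the paper's. Both start from the same duality reduction: assume $(g,w)=0$ for all $w\in M_1$ and show $g=0$. From there the paper never interprets the orthogonality as a weak-derivative identity. Instead it exploits the translation invariance of $M_1$ (via Property~\ref{pro:TranslationDerivative}) to conclude that the convolution $G(y)=g(-\,\cdot\,)*w$ vanishes identically; the convolution theorem then gives $\widehat{g(-\,\cdot\,)}\cdot\widehat{w}=0$, and a dilation $\psi(\epsilon x)$ is used to sweep the interval on which $\widehat{w}\neq 0$ over all of $\mathbb{R}$. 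Your route instead reads $(g,\boldsymbol{D}^{s}\psi)=(-g,\psi)$ directly as the weak relation $\boldsymbol{D}^{s*}g=-g$, places $g$ in $\widehat{H}^s(\mathbb{R})$ via Theorem~\ref{thm:EquivalenceOfSpaces}, and then passes to the multiplier identity $(1+(-2\pi i\xi)^s)\widehat{g}=0$ using Corollary~\ref{cor:DensityOfC}.

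What each buys: your approach is shorter and conceptually cleaner, but it leans on the Sobolev machinery (Theorem~\ref{thm:EquivalenceOfSpaces} and Corollary~\ref{cor:DensityOfC}) already imported from \cite{V.Ging&Y.Li}; the paper's approach is more self-contained, needing only elementary Fourier analysis and the translation/dilation properties of $\boldsymbol{D}^s$, so it would stand even without that prior characterization. One small slip: for $M_3,M_4$ the pairing is with $\boldsymbol{D}^{s*}\psi$ (not $\boldsymbol{D}^{s}\psi$), which is indeed what produces the weak \emph{left} derivative relation; your Fourier-side conclusion is unaffected.
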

\begin{proof}
1. The proof is provided for $M_1$ only and the results for other sets follow analogously without essential differences.

2. To use Theorem~\ref{Thm:dense}, first notice $L^2(\mathbb{R})$ is a Hilbert space and $M_1\subset L^2(\mathbb{R})$ by Property~\ref{prop:Boundedness}. Furthermore, it is effortless to verify $M_1$ is a subspace of $L^2(\mathbb{R})$. Thus all the hypothesis of Theorem~\ref{Thm:dense} is met. 

3. Let us assume that $g\in L^2(\mathbb{R})$ and $(g, w)=0$ for any $w\in M_1$. The proof is done if this implies $g=0$ a.e..

4. Pick a non-zero function $\psi\in C^\infty_0(\mathbb{R})$, then by Plancherel Theorem we know $\widehat{\psi}(\xi)$ is a non-zero function, namely, $\widehat{\psi}(\xi)\neq 0$. On account of continuity of $\widehat{\psi}(\xi)$, there exists a non-empty open interval $(a,b)\subset \mathbb{R}$ such that $\widehat{\psi}(\xi)\neq 0$ on $(a,b)$.

5. Let $v(x)=\psi(\epsilon x)$, where $\epsilon$ is any positive fixed number. It's clear that $v(x)\in C^\infty_0(\mathbb{R})$ as well. Computing the Fourier transform of $w(x)=v(x)+\boldsymbol{D}^sv$ by Property~\ref{lem:FTFD} gives
\begin{equation}
\widehat{w}(\xi)=(1+(2\pi i \xi)^s)\widehat{v}(\xi)=(1+(2\pi i \xi)^s)\cdot\dfrac{1}{\epsilon}\widehat{\psi}(\dfrac{\xi}{\epsilon}).
\end{equation}
Notice $|1+(2\pi i \xi)^s|\neq 0$ a.e. and $\widehat{\psi}(\dfrac{\xi}{\epsilon})\neq 0$ on $(a\epsilon, b\epsilon)$. It follows that $\widehat{w}(\xi)\neq 0$ a.e. on $(a\epsilon, b\epsilon)$.

6. Set new function $G(-y) :=\int_\mathbb{R} g(x)w(x-y)\, {\rm d}x = \int_\mathbb{R} g(x)\, \tau_y w(x) \, {\rm d}x$. Using Property~\ref{pro:TranslationDerivative} gives $\tau_y w(x)\in M_1$ and therefore $G(-y)=0$ for any $y\in \mathbb{R}$ by our assumption in Step 3. And thus by Plancherel Theorem we know $\widehat{G}(\xi)= 0$ a.e. as well. 

7. Note that $G(y)=g(-x)*w(x)$ and $g,w\in L^2(\mathbb{R})$. Using convolution theorem (\cite{MR2218073}, Theorem 1.2, p.12) gives $\widehat{G}=\widehat{g(-x)}\cdot\widehat{w(x)}=0$, which implies $\widehat{g(-x)}(\xi)=0$ a.e.  on $(\epsilon a,\epsilon b)$ by recalling $\widehat{w}(\xi)\neq 0$ a.e. on $(a\epsilon, b\epsilon)$.

8. Because of the arbitrariness of $\epsilon$, we deduce $\widehat{g(-x)}(\xi)=0$ a.e. in $\mathbb{R}$ and therefore $g(x)=0$ a.e in $\mathbb{R}$ by another use of inverse Plancherel Theorem. This completes the whole proof.
\end{proof}
\begin{lemma}\label{normestimate-New}
 Given $0\leq s$,  $\psi \in C^\infty_0(\mathbb{R})$, then
 \begin{align}
  \|\psi\pm\boldsymbol{D}^{s}\psi\|^2_{L^2(\mathbb{R})} &=\|\psi\|^2_{L^2(\mathbb{R})}\pm2\cos(\frac{s}{2}\pi)\|\boldsymbol{D}^{s/2}\psi\|^2_{L^2(\mathbb{R})}+\|\boldsymbol{D}^{s}\psi\|^2_{L^2(\mathbb{R})}, \label{equation1-new}\\
 \|\psi\pm\boldsymbol{D}^{s*}\psi\|^2_{L^2(\mathbb{R})} &=\|\psi\|^2_{L^2(\mathbb{R})}\pm2\cos(\frac{s}{2}\pi)\|\boldsymbol{D}^{s/2}\psi\|^2_{L^2(\mathbb{R})}+\|\boldsymbol{D}^{s*}\psi\|^2_{L^2(\mathbb{R})}.\label{euqation2-new}
 \end{align}

 \end{lemma}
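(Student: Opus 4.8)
The plan is to compute both norms directly on the Fourier (Plancherel) side using Property~\ref{lem:FTFD}. Since $\psi\in C_0^\infty(\mathbb{R})$, all the relevant fractional derivatives $\boldsymbol{D}^{s/2}\psi$, $\boldsymbol{D}^s\psi$, $\boldsymbol{D}^{s*}\psi$, $\boldsymbol{D}^{s/2*}\psi$ belong to $L^2(\mathbb{R})$ by Property~\ref{prop:Boundedness}, so Plancherel's theorem applies and we may write, for the left-sided case, $\widehat{\boldsymbol{D}^s\psi}(\xi)=(2\pi i\xi)^s\widehat\psi(\xi)$. First I would expand
\begin{equation}
\|\psi\pm\boldsymbol{D}^s\psi\|_{L^2(\mathbb{R})}^2 = \int_{\mathbb{R}}\bigl|1\pm(2\pi i\xi)^s\bigr|^2\,|\widehat\psi(\xi)|^2\,{\rm d}\xi,
\end{equation}
and then compute $|1\pm(2\pi i\xi)^s|^2$ using the convention $(i\xi)^s=|2\pi\xi|^s e^{s\pi i\,\mathrm{sign}(\xi)/2}$ (absorbing the $2\pi$). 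Writing $z=(2\pi i\xi)^s$, we have $|1\pm z|^2 = 1 \pm 2\,\mathrm{Re}(z) + |z|^2 = 1 \pm 2|2\pi\xi|^s\cos\!\bigl(\tfrac{s\pi}{2}\mathrm{sign}(\xi)\bigr) + |2\pi\xi|^{2s}$. Since cosine is even, $\cos\!\bigl(\tfrac{s\pi}{2}\mathrm{sign}(\xi)\bigr)=\cos\!\bigl(\tfrac{s\pi}{2}\bigr)$ for all $\xi\neq 0$, so the cross term is $\pm 2\cos(\tfrac{s\pi}{2})\,|2\pi\xi|^s\,|\widehat\psi(\xi)|^2$.

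Next I would recognize each of the three resulting integrals. The term $\int_{\mathbb{R}}|\widehat\psi|^2 = \|\psi\|_{L^2}^2$ and $\int_{\mathbb{R}}|2\pi\xi|^{2s}|\widehat\psi|^2 = \int_{\mathbb{R}}|(2\pi i\xi)^s|^2|\widehat\psi|^2 = \|\boldsymbol{D}^s\psi\|_{L^2}^2$ by Plancherel. For the cross term, the key observation is that $|2\pi\xi|^s = |(2\pi i\xi)^{s/2}|^2$, hence
\begin{equation}
\int_{\mathbb{R}}|2\pi\xi|^s|\widehat\psi(\xi)|^2\,{\rm d}\xi = \int_{\mathbb{R}}\bigl|(2\pi i\xi)^{s/2}\widehat\psi(\xi)\bigr|^2\,{\rm d}\xi = \int_{\mathbb{R}}\bigl|\widehat{\boldsymbol{D}^{s/2}\psi}(\xi)\bigr|^2\,{\rm d}\xi = \|\boldsymbol{D}^{s/2}\psi\|_{L^2}^2.
\end{equation}
Assembling the three pieces yields \eqref{equation1-new}. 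The right-sided identity \eqref{euqation2-new} follows by the identical computation with $(2\pi i\xi)^s$ replaced by $(-2\pi i\xi)^s$; since $|{-2\pi i\xi}|=|2\pi\xi|$ and $\mathrm{Re}\bigl((-2\pi i\xi)^s\bigr)=|2\pi\xi|^s\cos(-\tfrac{s\pi}{2}\mathrm{sign}(\xi))=|2\pi\xi|^s\cos(\tfrac{s\pi}{2})$ again by evenness of cosine, every term is unchanged except $\|\boldsymbol{D}^s\psi\|_{L^2}^2$ becomes $\|\boldsymbol{D}^{s*}\psi\|_{L^2}^2$, and similarly $\|\boldsymbol{D}^{s/2}\psi\|_{L^2}^2 = \|\boldsymbol{D}^{s/2*}\psi\|_{L^2}^2$ since both equal $\int_{\mathbb{R}}|2\pi\xi|^s|\widehat\psi|^2$.

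The only subtlety—hardly an obstacle—is bookkeeping with the complex powers and confirming that the sign of $\xi$ drops out of the cosine, together with justifying the use of Plancherel's theorem on $\boldsymbol{D}^{s/2}\psi$ (covered by Property~\ref{prop:Boundedness}) and that Property~\ref{lem:FTFD} gives the Fourier symbol pointwise a.e. so squaring and integrating is legitimate. One should also note the degenerate case $s=0$, where $\boldsymbol{D}^0\psi=\psi$, $\cos 0 = 1$, and the identity reduces to $\|\psi\pm\psi\|^2 = (2\|\psi\|)^2$ or $0$, consistent with the formula. No density or approximation argument is needed here since everything is stated for $\psi\in C_0^\infty(\mathbb{R})$ directly.
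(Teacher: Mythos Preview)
Your proof is correct and takes a genuinely different route from the paper's. The paper expands the square to reduce the claim to $(\psi,\boldsymbol{D}^s\psi)=\cos(\tfrac{s}{2}\pi)\|\boldsymbol{D}^{s/2}\psi\|_{L^2}^2$, and then establishes this identity entirely on the real side: it writes $s=n-\delta$, uses integration by parts together with the semigroup Property~\ref{semigroup-new} and an external lemma (Lemma~2.2 of \cite{MR2218073}) to show $(\psi,\boldsymbol{D}^s\psi)=(\boldsymbol{D}^{s/2*}\psi,\boldsymbol{D}^{s/2}\psi)$, with a case split on the parity of $n$, and finally invokes Theorem~4.1 of \cite{V.Ging&Y.Li} to extract the cosine. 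Your argument bypasses all of this by going straight to the Fourier side via Property~\ref{lem:FTFD} and computing $|1\pm(2\pi i\xi)^s|^2$ explicitly; the cosine appears directly as $\mathrm{Re}\,e^{\pm is\pi\,\mathrm{sign}(\xi)/2}$, and the evenness of cosine removes the $\mathrm{sign}(\xi)$. This is shorter, self-contained (no appeal to the external Theorem~4.1), and avoids the even/odd case split. The paper's approach, on the other hand, makes the intermediate identity $(\psi,\boldsymbol{D}^s\psi)=(\boldsymbol{D}^{s/2*}\psi,\boldsymbol{D}^{s/2}\psi)$ explicit as a real-variable fact, which may be of independent interest in settings where Fourier methods are less convenient.
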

 \begin{proof}
1. The proof is established for one case of identities above only, namely $ \|\psi+\boldsymbol{D}^{s}\psi\|^2_{L^2(\mathbb{R})}$, and the others can be shown analogously by repeating the same procedure (even though involve left derivative and right derivative in different cases).

2. Since $\|\psi+\boldsymbol{D}^{s}\psi\|^2_{L^2(\mathbb{R})} = \|\psi\|^2_{L^2(\mathbb{R})}+\|\boldsymbol{D}^{s}\psi\|^2_{L^2(\mathbb{R})}+2 (\psi, \boldsymbol{D}^s \psi)$, we only need to show $(\psi, \boldsymbol{D}^s \psi)=\cos(\frac{s}{2}\pi)\|\boldsymbol{D}^{s/2}\psi\|^2_{L^2(\mathbb{R})}$.

3. Note that if we could show $(\psi, \boldsymbol{D}^s \psi)=(\boldsymbol{D}^{s/2*}\psi, \boldsymbol{D}^{s/2}\psi)$, then immediately $(\psi, \boldsymbol{D}^s \psi)=\cos(\frac{s}{2}\pi)\|\boldsymbol{D}^{s/2}\psi\|^2_{L^2(\mathbb{R})}$ follows by application of the second identity of Theorem 4.1 in \cite{V.Ging&Y.Li} and so the proof is done. 

4. The fact $(\psi, \boldsymbol{D}^s \psi)=(\boldsymbol{D}^{s/2*}\psi, \boldsymbol{D}^{s/2}\psi)$ could be verified by a straightforward calculation as follows. Let us rewrite $s=n-\delta$ with $0\leq \delta<1$, where $n$ is a positive integer. Notice $\psi\in C^\infty_0(\mathbb{R})$, if integer $n$ is even, using definition of R-L derivative and Lemma 2.2 (\cite{MR2218073}, p.73) gives
\begin{equation}\label{RepeatingEquation}
 (\psi, \boldsymbol{D}^s \psi)=(\psi, \frac{\rm d}{\rm dx^{(n/2)}}\boldsymbol{D}^{n/2-\delta} \psi)=(\psi, \frac{\rm d}{\rm dx^{(n/2)}}\boldsymbol{D}^{-\delta} \psi^{(n/2)}).
\end{equation}
 Using integration by parts and semigroup property \ref{semigroup-new} the last term becomes
 $$ (\psi, \frac{\rm d}{\rm dx^{(n/2)}}\boldsymbol{D}^{-\delta} \psi^{(n/2)})=(\boldsymbol{D}^{n/2*}\psi, \boldsymbol{D}^{-\delta/2}\boldsymbol{D}^{-\delta/2}\psi^{(n/2)}).$$
To simplify the right-hand side, notice the fact that $\boldsymbol{D}^{-\delta/2}\psi^{(n/2)}=\boldsymbol{D}^{n/2-\delta/2}\psi$ by applying Lemma 2.2 (\cite{MR2218073}, p.73), and this gives $\boldsymbol{D}^{-\delta/2}\psi^{(n/2)}$ $\in L^p(\mathbb{R})$ for $p\geq 1$ by Property~\ref{prop:Boundedness}. Thus
 $$ (\boldsymbol{D}^{n/2*}\psi, \boldsymbol{D}^{-\delta/2}\boldsymbol{D}^{-\delta/2}\psi^{(n/2)})=(\boldsymbol{D}^{s/2*}\psi, \boldsymbol{D}^{s/2}\psi)$$
 follows immediately from Property~\ref{prp:IntegrationExchange} and another use of Lemma 2.2 (\cite{MR2218073}, p.73). Therefore $(\psi, \boldsymbol{D}^s \psi)=(\boldsymbol{D}^{s/2*}\psi, \boldsymbol{D}^{s/2}\psi)$.

5. If integer $n$ is odd, using definition of R-L derivative and Lemma 2.2 (\cite{MR2218073}, p.73), it is not difficult to verify
 $$(\psi, \boldsymbol{D}^s \psi)=(\psi, \frac{\rm d}{\rm dx^{((n+1)/2)}}\boldsymbol{D}^{-(\delta+1)} \psi^{((n+1)/2)}).$$ Now $n+1$ is even. Repeating above procedure starting from Equation~\eqref{RepeatingEquation} gives us the desired result, which is omitted here. Thus we are done.
 \end{proof}
\begin{lemma}\label{Existence-new}
(a). Let $\chi(s) =
\begin{cases}
1, \text{ if } \frac{s}{2} \in (0,\frac{1}{2}] \text{ or }  \big[\frac{3}{2}+2k, \frac{5}{2}+2k \big], k \in \mathbb{N},\\
-1, \text{ if } \frac{s}{2}\in \big( \frac{1}{2}+2k, \frac{3}{2}+2k \big), ~ k \in \mathbb{N}. 
\end{cases}$

Given $0\leq s$, there exists a one to one and onto map $T: v\mapsto u$ from $\widehat{H}^s(\mathbb{R})$ to $L^2(\mathbb{R})$ such that
\begin{equation}\label{MainResult-equation1}
v+\chi(s)\cdot\boldsymbol{D}^sv=u.
\end{equation}
Analogously, there exists a one to one and onto map $T^*: v\mapsto u$ from $\widehat{H}^s(\mathbb{R})$ to $L^2(\mathbb{R})$ such that
\begin{equation}\label{MainResult-equation2}
v+\chi(s)\cdot\boldsymbol{D}^{s*}v=u.
\end{equation}
(b). Furthermore, for each case, $v\in \widehat{H}^{s+t}(\mathbb{R})$ if and only if $u\in \widehat{H}^{t}(\mathbb{R})$ for any $t>0.$
\end{lemma}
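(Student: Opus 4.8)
The plan is to treat part (a) and part (b) separately, using the three preceding lemmas as the backbone. For part (a), I would define the linear map $T$ on the dense subspace $C_0^\infty(\mathbb R)\subset\widehat H^s(\mathbb R)$ by $Tv = v+\chi(s)\boldsymbol D^s v$, and then argue that $T$ is an isometry (up to an equivalent norm) from $\widehat H^s(\mathbb R)$ onto $L^2(\mathbb R)$. The key observation is that the sign $\chi(s)$ is chosen precisely so that $\chi(s)\cos(\tfrac s2\pi)\ge 0$: indeed $\cos(\tfrac s2\pi)>0$ exactly when $\tfrac s2\in[0,\tfrac12)\cup(\tfrac32+2k,\tfrac52+2k)$ and $\cos(\tfrac s2\pi)<0$ on the complementary intervals, so in all cases $\chi(s)\|\psi\|^2 + $ the cross term in Lemma~\ref{normestimate-New} has a favorable sign. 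Concretely, applying Lemma~\ref{normestimate-New} with the sign $\pm=\chi(s)$,
\begin{equation}
\|\psi+\chi(s)\boldsymbol D^s\psi\|^2_{L^2(\mathbb R)} = \|\psi\|^2_{L^2(\mathbb R)} + 2\chi(s)\cos(\tfrac s2\pi)\|\boldsymbol D^{s/2}\psi\|^2_{L^2(\mathbb R)} + \|\boldsymbol D^s\psi\|^2_{L^2(\mathbb R)},
\end{equation}
and since the middle term is nonnegative, this quantity is bounded below by $\|\psi\|^2_{L^2}+\|\boldsymbol D^s\psi\|^2_{L^2}=\|\psi\|^2_{\star}$ (equivalently $\sim\|\psi\|^2_{\widehat H^s}$) and above by a constant multiple of the same, using that $\|\boldsymbol D^{s/2}\psi\|_{L^2}$ is controlled by interpolation (or by Property~\ref{lem:FTFD} on the Fourier side, since $|2\pi\xi|^{s}\le 1+|2\pi\xi|^{2s}$ pointwise, hence $|2\pi\xi|^{s}|\widehat\psi|^2$ is integrable once $\psi\in\widehat H^s$). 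Therefore $T$ extends to a bounded, bounded-below linear map on all of $\widehat H^s(\mathbb R)$ via Corollary~\ref{cor:DensityOfC}; it is injective because it is bounded below, and its range is closed for the same reason. To get surjectivity I invoke Lemma~\ref{Lemma1-new}: the range of $T$ contains $M_1$ (or $M_2$, according to the sign), which is dense in $L^2(\mathbb R)$, and a closed dense subspace is everything. The same argument verbatim, with $\boldsymbol D^s$ replaced by $\boldsymbol D^{s*}$ and Lemma~\ref{Lemma1-new} applied to $M_3,M_4$, gives $T^*$.

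For part (b), the natural route is the Fourier/Plancherel side. By Property~\ref{lem:FTFD} and density (Corollary~\ref{cor:DensityOfC}), the relation $v+\chi(s)\boldsymbol D^s v=u$ transforms to $\widehat u(\xi)=(1+\chi(s)(2\pi i\xi)^s)\widehat v(\xi)$ a.e., and the multiplier $m(\xi):=1+\chi(s)(2\pi i\xi)^s$ satisfies $|m(\xi)|\asymp (1+|\xi|)^s$ — the lower bound again being where the sign $\chi(s)$ matters, since it rules out $m$ vanishing (one checks $|m(\xi)|^2 = 1+2\chi(s)\cos(\tfrac s2\pi)|2\pi\xi|^s+|2\pi\xi|^{2s}\ge \tfrac12(1+|2\pi\xi|^{2s})$ using $\chi(s)\cos(\tfrac s2\pi)\ge 0$ and Young's inequality for the middle term). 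Then for any $t>0$, recalling the Fourier characterization $v\in\widehat H^{s+t}(\mathbb R)\iff (1+|\xi|^2)^{(s+t)/2}\widehat v\in L^2$ from~\ref{thm:FTHsR}, we get
\begin{equation}
\int_{\mathbb R}(1+\xi^2)^{t}|\widehat u|^2\,d\xi = \int_{\mathbb R}(1+\xi^2)^{t}|m(\xi)|^2|\widehat v|^2\,d\xi \asymp \int_{\mathbb R}(1+\xi^2)^{t}(1+|\xi|)^{2s}|\widehat v|^2\,d\xi \asymp \int_{\mathbb R}(1+\xi^2)^{s+t}|\widehat v|^2\,d\xi,
\end{equation}
so the left side is finite iff the right side is, which is exactly $u\in\widehat H^t(\mathbb R)\iff v\in\widehat H^{s+t}(\mathbb R)$.

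The main obstacle I anticipate is not any single step but the careful bookkeeping of the sign function $\chi(s)$ across all residue classes of $s/2$ modulo $2$ — in particular verifying that $\chi(s)$ as defined genuinely makes $\chi(s)\cos(\tfrac s2\pi)\ge 0$ for every $s\ge 0$ (including the boundary cases $s/2=\tfrac12,\tfrac32+2k,\dots$ where $\cos$ vanishes and either sign would do, which is why the intervals are taken half-open/closed as written), and confirming that the map in (a) and the multiplier $m$ in (b) are consistent, i.e.\ that the $T$ produced by the functional-analytic argument is the same operator whose symbol is $m$. A secondary technical point is justifying the passage from $C_0^\infty$ to $\widehat H^s$ in the identity $\widehat u = m\,\widehat v$: one needs that if $\psi_n\to v$ in $\widehat H^s$ then $\boldsymbol D^s\psi_n\to\boldsymbol D^s v$ in $L^2$ (Corollary~\ref{cor:DensityOfC}) and hence, via Plancherel, $(2\pi i\xi)^s\widehat\psi_n\to$ the transform of $\boldsymbol D^s v$ in $L^2$, so the pointwise-a.e.\ multiplier identity survives the limit. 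Everything else is routine once the sign analysis is pinned down.
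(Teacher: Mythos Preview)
Your argument for part (a) is correct and is essentially the paper's own proof, just repackaged: the paper also uses Lemma~\ref{normestimate-New} together with the observation $\chi(s)\cos(\tfrac{s}{2}\pi)\ge 0$ to show that a sequence $\{\psi_n+\chi(s)\boldsymbol D^s\psi_n\}$ is Cauchy in $L^2$ iff $\{\psi_n\}$ and $\{\boldsymbol D^s\psi_n\}$ are separately Cauchy, and then invokes Lemma~\ref{Lemma1-new} for surjectivity and Corollary~\ref{cor:DensityOfC} to identify the limit. Your ``bounded below $+$ closed range $+$ dense range'' formulation is the same content in functional-analytic language.

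For part (b), however, you take a genuinely different route. The paper stays on the spatial side: for the forward implication it shows directly that $\boldsymbol D^s v\in\widehat H^t(\mathbb R)$ via the weak-derivative definition and density, and for the backward implication it \emph{bootstraps}: writing $t=t_1+\cdots+t_n$ with each $t_i\le s$, it uses $\boldsymbol D^s v=u-v\in\widehat H^{t_1}$ to upgrade $v$ to $\widehat H^{s+t_1}$, then iterates. Your approach instead passes immediately to the Fourier side, identifies the multiplier $m(\xi)=1+\chi(s)(2\pi i\xi)^s$, and reads off the equivalence $u\in\widehat H^t\iff v\in\widehat H^{s+t}$ from the two-sided bound $|m(\xi)|^2\asymp 1+|2\pi\xi|^{2s}$. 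This is shorter and avoids the somewhat artificial decomposition of $t$, but it relies on the identity $\widehat{\boldsymbol D^s v}=(2\pi i\xi)^s\widehat v$ for general $v\in\widehat H^s$ (available from \cite{V.Ging&Y.Li}), which the paper deliberately postpones to the next lemma (Lemma~\ref{LastLemma}). In effect your argument collapses Lemma~\ref{Existence-new}(b) and Lemma~\ref{LastLemma} into a single Fourier computation; the paper keeps them separate so that Lemma~\ref{Existence-new} is proved entirely with real-variable tools. One small cosmetic point: in your lower bound for $|m(\xi)|^2$ the appeal to Young's inequality is unnecessary---since the cross term $2\chi(s)\cos(\tfrac{s}{2}\pi)|2\pi\xi|^s$ is already nonnegative, you get $|m(\xi)|^2\ge 1+|2\pi\xi|^{2s}$ outright.
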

\begin{proof}
1. For part (a), the proof is shown only for Equality~\eqref{MainResult-equation1} since the second one could be shown similarly.

2. First we show the map is onto. Fix a $u\in L^2(\mathbb{R})$, and without loss of generality, we assume $\chi(s)=1$. Invoking Lemma~\ref{Lemma1-new}, there exists a Cauchy sequence $\{v_n-\boldsymbol{D}^sv_n\}$ converging to $u$ in $L^2(\mathbb{R})$, where $\{v_n\}\subset C_0^\infty(\mathbb{R})$. This implies that  $$\|(v_m-v_n)-(\boldsymbol{D}^sv_m-\boldsymbol{D}^sv_n)\|^2_{L^2(\mathbb{R})} \rightarrow 0 \quad\text{ as} \quad m,n \rightarrow \infty.$$
We next employ Lemma~\ref{normestimate-New} to equivalently obtain that, as $m,n \rightarrow \infty$, $$\|v_m-v_n\|^2_{L^2(\mathbb{R})}+2\cos(\frac{s}{2}\pi)\|\boldsymbol{D}^{s/2}v_m-\boldsymbol{D}^{s/2}v_n\|^2_{L^2(\mathbb{R})}+\|\boldsymbol{D}^{s}v_m-\boldsymbol{D}^{s}v_n\|^2_{L^2(\mathbb{R})}$$
$\rightarrow 0.$
Now notice that $2\cos(\frac{s}{2}\pi)\geq 0$ since $\chi(s)=1$. Therefore, we deduce that $\{v_n\}$ and $\{\boldsymbol{D}^sv_n\}$ are Cauchy sequences respectively. This concludes that , if we denote $v=\lim_{n\rightarrow \infty}v_n$, $v+\chi(s)\cdot\boldsymbol{D}^s v=u$ and $v\in \widehat{H}^s(\mathbb{R})$ by Corollary~\ref{cor:DensityOfC} . 

3. We now show the map is one to one. First notice the fact that $\boldsymbol{D}^sv \in L^2(\mathbb{R})$ if $v\in \widehat{H}^s(\mathbb{R})$ by the definition of weak fractional derivative, therefore $v+\chi(s)\cdot\boldsymbol{D}^sv\in L^2(\mathbb{R})$, which means $T: v\mapsto u$ is a map by the uniqueness of weak fractional derivative~\ref{uniquenessofR-L}. Second, assume there exists another $\tilde{v}$ such that $\tilde{v}+\boldsymbol{D}^s \tilde{v}=v+\boldsymbol{D}^s v$, then $(\tilde{v}-v)+\boldsymbol{D}^s( \tilde{v}- v)=0$ immediately follows. By using a standard norm estimate argument and application of Lemma~\ref{normestimate-New}, it is easy to obtain $\|\tilde{v}-v\|^2_{L^2(\mathbb{R})}=0$, which implies $\tilde{v}=v$. Thus $T: v\mapsto u$ is a one to one map. 

4. We remain to show part (b). First suppose  $v\in \widehat{H}^{s+t}(\mathbb{R})$, obviously  $v\in \widehat{H}^{t}(\mathbb{R})$ since $\widehat{H}^{s+t}(\mathbb{R})\subset \widehat{H}^{t}(\mathbb{R})$. 
 Now we claim $\boldsymbol{D}^{s}v\in \widehat{H}^t(\mathbb{R})$. To see this, using Corollary~\ref{cor:DensityOfC}, there exist Cauchy sequences $\{v_n\}\subset C_0^\infty(\mathbb{R})$  and $\{\boldsymbol{D}^{s+t}v_n\}$ in $L^2(\mathbb{R})$ such that $v_n\rightarrow v, \boldsymbol{D}^{s+t}v_n\rightarrow \boldsymbol{D}^{s+t}v$ as $n\rightarrow \infty$. 
 Thus for any $\psi\in C_0^\infty(\mathbb{R})$, we have  $$(\boldsymbol{D}^{s}v, \boldsymbol{D}^{t*} \psi)=\lim_{m\rightarrow \infty}(\boldsymbol{D}^{s}v_n, \boldsymbol{D}^{t*}\psi)=\lim_{m\rightarrow \infty}(\boldsymbol{D}^{s+t}v_n, \psi)=(\boldsymbol{D}^{s+t}v,\psi).$$
By definition of weak fractional derivative we conclude $\boldsymbol{D}^{s}v\in \widehat{H}^t(\mathbb{R})$ and thus $u(x)\in \widehat{H}^{t}(\mathbb{R})$.

5. The last step is to show $u(x)\in \widehat{H}^{t}(\mathbb{R})$ implies $v\in \widehat{H}^{s+t}(\mathbb{R})$. First notice that it is always possible to rewrite $t=t_1+\cdots+t_n$ such that $t_i\geq 0,$  and $t_i\leq s$ $(i=1,\cdots, n)$. Thus, $ \widehat{H}^s(\mathbb{R})\subset \widehat{H}^{t_1}$ and $ \widehat{H}^t(\mathbb{R})\subset\widehat{H}^{t_1}$, and this deduces $\boldsymbol{D}^sv=u-v\in \widehat{H}^{t_1}(\mathbb{R})$. According to Theorem~\ref{thm:EquivalenceOfSpaces}, $\boldsymbol{D}^sv\in \widetilde{W}^{t_1}_L(\mathbb{R}) $, which by definition means that there exists a $Q(x)\in L^2(\mathbb{R})$ such that 
\begin{equation} \label{Middle1Equation}
(\boldsymbol{D}^sv, \boldsymbol{D}^{t_1*}\psi) = (Q, \psi) ~\forall \psi \in C_0^\infty(\mathbb{R}).
\end{equation}
Again, by invoking Corollary~\ref{cor:DensityOfC}, the left-hand side of Equation~\eqref{Middle1Equation} becomes
\begin{equation}
\begin{aligned}
(\boldsymbol{D}^{s}v, \boldsymbol{D}^{t_1*} \psi)&=\lim_{m\rightarrow \infty}(\boldsymbol{D}^{s}v_n, \boldsymbol{D}^{t_1*}\psi)\\
&=\lim_{m\rightarrow \infty}(v_n, \boldsymbol{D}^{(s+t_1)*}\psi)\\
&=(v,\boldsymbol{D}^{(s+t_1)*}\psi).
\end{aligned}
\end{equation}
This implies that $v\in \widehat{H}^{s+t_1}$ by another utilization of definition of $\widetilde{W}^{s}_L(\mathbb{R}) $ and  Theorem~\ref{thm:EquivalenceOfSpaces}. By repeating this procedure we could increase the regularity of $v$ gradually, namely, we could show that $v\in \widehat{H}^{s+t_1+t_2}(\mathbb{R}), \cdots , v\in \widehat{H}^{s+t_1+\cdots +t_n}(\mathbb{R})=\widehat{H}^{s+t}(\mathbb{R})$, as desired. Thus the whole proof is complete.
\end{proof}
\begin{lemma}\label{LastLemma}
(a). Let $\chi(s) =
\begin{cases}
1, \text{ if } \frac{s}{2} \in (0,\frac{1}{2}] \text{ or }  \big[\frac{3}{2}+2k, \frac{5}{2}+2k \big], k \in \mathbb{N},\\
-1, \text{ if } \frac{s}{2}\in \big( \frac{1}{2}+2k, \frac{3}{2}+2k \big), ~ k \in \mathbb{N}. 
\end{cases}$

Given $0\leq s$, there exists a one to one and onto map $T: v\mapsto u$ from $\widehat{H}^s(\mathbb{R})$ to $L^2(\mathbb{R})$ such that
\begin{equation}\label{Fourier-equation1}
(1+\chi(s)\cdot(2\pi \xi i)^s)\cdot\widehat{v}(\xi)=\widehat{u}(\xi).
\end{equation}
Analogously, there exists a one to one and onto map $T^*: v\mapsto u$ from $\widehat{H}^s(\mathbb{R})$ to $L^2(\mathbb{R})$ such that
\begin{equation}\label{Fourier-equation2}
(1+\chi(s)\cdot(-2\pi \xi i)^s)\cdot\widehat{v}(\xi)=\widehat{u}(\xi).
\end{equation}
(b). Furthermore, for each case, $v\in \widehat{H}^{s+t}(\mathbb{R})$ if and only if $u\in \widehat{H}^{t}(\mathbb{R})$ for any $t>0.$
\end{lemma}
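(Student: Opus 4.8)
The plan is to deduce Lemma~\ref{LastLemma} almost immediately from Lemma~\ref{Existence-new} by translating the statement into the Fourier (Plancherel) side. The key observation is that, on the Plancherel side, the operator $v\mapsto v+\chi(s)\cdot\boldsymbol{D}^s v$ acts as multiplication by $f(\xi)=1+\chi(s)\cdot(2\pi i\xi)^s$. First I would establish this identification rigorously for $v\in\widehat{H}^s(\mathbb{R})$: pick, via Corollary~\ref{cor:DensityOfC}, a sequence $\{v_n\}\subset C_0^\infty(\mathbb{R})$ with $v_n\to v$ and $\boldsymbol{D}^s v_n\to \boldsymbol{D}^s v$ in $L^2(\mathbb{R})$; apply Property~\ref{lem:FTFD} to get $\mathcal{F}(v_n+\chi(s)\boldsymbol{D}^s v_n)(\xi)=f(\xi)\mathcal{F}(v_n)(\xi)$ for $\xi\neq 0$; then pass to the limit using that the Plancherel transform is an $L^2$-isometry, so $\widehat{v_n}\to\widehat{v}$ and $\widehat{\boldsymbol{D}^s v_n}\to\widehat{\boldsymbol{D}^s v}$ in $L^2(\mathbb{R})$, yielding $\widehat{v+\chi(s)\boldsymbol{D}^s v}=f\cdot\widehat{v}$ a.e. (a subsequence converges a.e., which suffices).

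Once this identification is in place, part (a) follows directly. Define $T:\widehat{H}^s(\mathbb{R})\to L^2(\mathbb{R})$ exactly as the map from Lemma~\ref{Existence-new}, $T(v)=v+\chi(s)\boldsymbol{D}^s v=:u$. By the identification just proved, $\widehat{u}=f\cdot\widehat{v}$, which is precisely~\eqref{Fourier-equation1}. Lemma~\ref{Existence-new}(a) already tells us $T$ is one-to-one and onto $L^2(\mathbb{R})$, so nothing further is needed for the bijectivity claim. The starred case~\eqref{Fourier-equation2} is handled identically, using the second half of Property~\ref{lem:FTFD} (the factor $(-2\pi i\xi)^s$) together with the map $T^*$ of Lemma~\ref{Existence-new}.

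For part (b), I would again simply quote Lemma~\ref{Existence-new}(b): the map $T$ in Lemma~\ref{LastLemma} is literally the same map as in Lemma~\ref{Existence-new}, only re-described through its action on Plancherel transforms, so the equivalence $v\in\widehat{H}^{s+t}(\mathbb{R})\iff u\in\widehat{H}^t(\mathbb{R})$ transfers verbatim. It is worth remarking that nothing in part (b) depends on the Fourier-side reformulation; it is purely an assertion about the underlying spaces and the underlying operator.

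The only genuine point requiring care — and the step I expect to be the main (minor) obstacle — is the limiting argument identifying $\mathcal{F}(\boldsymbol{D}^s v_n)$ with $(2\pi i\xi)^s\widehat{v_n}(\xi)$ and then passing to the $L^2$-limit: one must be careful that Property~\ref{lem:FTFD} is stated for the Fourier transform $\mathcal{F}$ of $C_0^\infty$ functions (where $\boldsymbol{D}^s v_n\in L^1\cap L^2$ by Property~\ref{prop:Boundedness}, so $\mathcal{F}$ and the Plancherel transform agree), and that the a.e. convergence of a subsequence is what legitimizes the pointwise-a.e. identity $f(\xi)\widehat{v}(\xi)=\widehat{u}(\xi)$. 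Since $f$ is locally bounded away from $\xi=0$ and the single point $\xi=0$ is null, multiplication by $f$ commutes with the a.e. limit without difficulty. All remaining assertions are immediate consequences of Lemma~\ref{Existence-new}.
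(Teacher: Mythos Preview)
Your proposal is correct and follows essentially the same route as the paper: both invoke Lemma~\ref{Existence-new} for the bijection and for part~(b), and both reduce part~(a) to the identity $\widehat{\boldsymbol{D}^s v}=(2\pi i\xi)^s\widehat{v}$ for $v\in\widehat{H}^s(\mathbb{R})$. The only difference is cosmetic: the paper outsources that identity to the proof of Theorem~3.3 in~\cite{V.Ging&Y.Li}, whereas you spell out the approximation-by-$C_0^\infty$ argument (via Corollary~\ref{cor:DensityOfC} and Property~\ref{lem:FTFD}) directly.
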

\begin{proof}
1. For part (a), the proof is shown only for first Equation~\eqref{Fourier-equation1}, the other one follows by repeating the same procedure.

2. From Lemma~\ref{Existence-new}, there exists a one to one and onto map $T: v\mapsto u$ from $\widehat{H}^s(\mathbb{R})$ to $L^2(\mathbb{R})$ such that
\begin{equation}
v+\chi(s)\cdot\boldsymbol{D}^sv=u.
\end{equation}
Taking the Plancherel transform at both sides gives
$$\widehat{v}+\chi(s)\cdot\widehat{\boldsymbol{D}^sv}=\widehat{u}. $$
So far, we could not yet directly apply Property~\ref{lem:FTFD} to obtain $\widehat{\boldsymbol{D}^sv}(\xi)=(2\pi \xi i)^s\cdot\widehat{v}(\xi)$ since the condition is not met, namely $v$ does not necessarily belong to $C_0^\infty(\mathbb{R})$. The argument for $\widehat{\boldsymbol{D}^sv}(\xi)=(2\pi \xi i)^s\cdot\widehat{v}(\xi)$ is justified in the proof of Theorem 3.3 (\cite{V.Ging&Y.Li}) and will not be repeated here. Thus,
$$(1+\chi(s)\cdot(2\pi \xi i)^s)\cdot\widehat{v}(\xi)=\widehat{u}(\xi).$$

3. The proof of part (b) directly follows from the part (b) of Lemma~\ref{Existence-new}, which completes the whole proof.
\end{proof}
Now we are in the position to prove Theorem~\ref{MAINTHEOREM}.
\begin{proof}
1. We intend to construct a one to one and onto map $v\mapsto u$ from $\widehat{H}^s(\mathbb{R})$ to $\widehat{H}^{\tau}(\mathbb{R})$ such that
$$f(\xi)\cdot \widehat{v}(\xi)=\widehat{u}(\xi).$$ Then the proof is done. 

2. Utilizing both part (a) and (b) of Lemma~\ref{LastLemma}, we know there exists a one to one and onto map $T_1: v\mapsto u_1$ from $\widehat{H}^{s}(\mathbb{R})$ to $\widehat{H}^{s-s_1}(\mathbb{R})$ such that
\begin{equation}
(1+\chi(s)\cdot(2\pi \xi i)^{s_1})\cdot\widehat{v}(\xi)=\widehat{u_1}(\xi).
\end{equation}
Repeating the same application of Lemma~\ref{LastLemma} for $u_1$, we know 
there exists a one to one and onto map $T_2: u_1\mapsto u_2$ from $\widehat{H}^{s-s_1}(\mathbb{R})$ to $\widehat{H}^{s-s_1-s_2}(\mathbb{R})$ such that
\begin{equation}
(1+\chi(s)\cdot(2\pi \xi i)^{s_2})\cdot\widehat{u_1}(\xi)=\widehat{u_2}(\xi).
\end{equation}
By repeating the same procedure, we obtain one to one and onto maps $T_2,\cdots, T_n$, where $T_n:u_{n-1}\rightarrow u_n$ from $\widehat{H}^{s-s_1-\cdots-s_{n-1}}(\mathbb{R})$ to $\widehat{H}^{s-s_1-\cdots-s_n}(\mathbb{R})$ such that
\begin{equation}
(1+\chi(s)\cdot(2\pi \xi i)^{s_n})\cdot\widehat{u_{n-1}}(\xi)=\widehat{u_n}(\xi).
\end{equation}
Recall $\tau=s-\Sigma_{i=1}^n s_i$, therefore, $T=T_1\circ T_2\cdots \circ T_n:v \mapsto u_n$ is a one to one and onto map $v\mapsto u_n $ from $\widehat{H}^s(\mathbb{R})$ to $\widehat{H}^{\tau}(\mathbb{R})$, satisfying $f(\xi)\cdot\widehat{v}(\xi)=\widehat{u_n}(\xi)$.  This completes Step 1 above by regarding $u_n$ as $u$, and thus completes the whole proof for Theorem~\ref{MAINTHEOREM}.
\end{proof}

\begin{appendices}
\renewcommand{\theequation}{\thesection.\arabic{equation}}
\numberwithin{equation}{section}
\section{Several Definitions and Theorems}

\begin{definition}[Sobolev Spaces Via Fourier Transform]\label{thm:FTHsR}
Let $\mu\geq 0$. Define
\begin{equation}
\widehat{H}^\mu(\mathbb{R}) = \left \{ w \in L^2(\mathbb{R}) : \int_{\mathbb{R}} (1 + |2\pi\xi|^{2\mu}) |\widehat{w}(\xi) |^2 \, {\rm d} \xi < \infty \right \},
\end{equation}
where $\widehat{w}$ is Plancherel  transform defined in Theorem~\ref{thm:PAR}.
The space is endowed with semi-morn 
\begin{equation}
|u|_{\widehat{H}^\mu(\mathbb{R})}:=\||2\pi\xi|^\mu \widehat{u}\|_{L^2(\mathbb{R})},
\end{equation}
and norm
\begin{equation}
\|u\|_{\widehat{H}^\mu (\mathbb{R})}:=\left(\|u\|^2_{L^2(\mathbb{R})} +|u|^2_{\widehat{H}^\mu(\mathbb{R})}\right)^{1/2}.
\end{equation}
\end{definition}
\noindent
And it is well-known that $\widehat{H}^\mu(\mathbb{R})$ is a Hilbert space. 
\begin{definition}[Fourier Transform] \label{def:FT}
Given a function $f: \mathbb{R} \rightarrow \mathbb{R}$, the Fourier Transform of $f$ is defined as
$$
\mathcal{F}(f)(\xi):=\int_{-\infty}^{\infty}e^{-2\pi i x\xi}f(x)~ {\rm d}x \quad \forall \xi \in \mathbb{R}.
$$
\end{definition}
\begin{theorem}[Plancherel Theorem ( \cite{MR924157} p. 187)] \label{thm:PAR}

One can associate to each $f\in L^2(\mathbb{R})$ a function $\widehat{f}\in L^2(\mathbb{R})$ so that the following properties hold:
 \begin{itemize}
 \item If $f\in L^1(\mathbb{R})\cap L^2(\mathbb{R})$, then $\widehat{f} $ is the defined Fourier transform of $f$ in Definition \ref{def:FT}.
 \item For every $f\in L^2(\mathbb{R})$, $\|f\|_2=\|\widehat{f}\|_2$.
 \item The mapping $f\rightarrow \widehat{f}$ is a Hilbert space isomorphism of $L^2(\mathbb{R})$ onto $L^2(\mathbb{R})$.
 \end{itemize}
\end{theorem}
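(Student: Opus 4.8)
The plan is to prove this classical Plancherel Theorem by first defining the transform on the dense subspace $L^1(\mathbb{R})\cap L^2(\mathbb{R})$, establishing the $L^2$ norm identity there via a Gaussian summability device, and then extending by continuity and verifying surjectivity through an inverse transform. Throughout I use the normalization of Definition~\ref{def:FT}, which makes the Gaussian self-dual and eliminates spurious constants.

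The preliminary step is to record three standard facts for $L^1$ data. By Fubini applied to the absolutely integrable double integral $\iint e^{-2\pi i x\xi}f(x)g(\xi)\,{\rm d}x\,{\rm d}\xi$, one obtains the multiplication formula $\int \mathcal{F}(f)\,g=\int f\,\mathcal{F}(g)$ for $f,g\in L^1(\mathbb{R})$; by Fubini again one has the convolution identity $\mathcal{F}(f*g)=\mathcal{F}(f)\,\mathcal{F}(g)$. Finally, a direct computation gives $\mathcal{F}(\phi_t)(\xi)=t^{-1/2}e^{-\pi\xi^2/t}$ for the scaled Gaussian $\phi_t(x)=e^{-\pi t x^2}$, so that the family $G_t(y)=t^{-1/2}e^{-\pi y^2/t}$ is an approximate identity as $t\to 0^+$. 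With these in hand I would fix $f\in L^1(\mathbb{R})\cap L^2(\mathbb{R})$, set $g(x)=\overline{f(-x)}$ (so $g\in L^1\cap L^2$ and $\mathcal{F}(g)=\overline{\mathcal{F}(f)}$), and form $h=f*g$. Then $h\in L^1(\mathbb{R})$ is continuous, $\mathcal{F}(h)=|\mathcal{F}(f)|^2\ge 0$, and $h(0)=\int|f|^2=\|f\|_2^2$. The key computation evaluates $h(0)$ two ways: the multiplication formula gives $\int \mathcal{F}(h)(\xi)e^{-\pi t\xi^2}\,{\rm d}\xi=(h*G_t)(0)\to h(0)$ by continuity of $h$, while monotone convergence (permissible because $\mathcal{F}(h)\ge 0$) gives the same integral $\to\int\mathcal{F}(h)$. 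Equating the limits yields $\|\mathcal{F}(f)\|_2^2=\int\mathcal{F}(h)=h(0)=\|f\|_2^2$, the Plancherel identity on $L^1\cap L^2$.

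Since $L^1(\mathbb{R})\cap L^2(\mathbb{R})$ is dense in $L^2(\mathbb{R})$ and $\mathcal{F}$ is a linear $L^2$-isometry there, it extends uniquely to a linear isometry $f\mapsto\widehat{f}$ on all of $L^2(\mathbb{R})$; by construction $\widehat{f}=\mathcal{F}(f)$ on $L^1\cap L^2$ and $\|\widehat{f}\|_2=\|f\|_2$, which are the first two bullets. For surjectivity I would run the identical argument for the conjugate transform $\mathcal{F}^*(g)(x)=\int e^{2\pi i x\xi}g(\xi)\,{\rm d}\xi$, obtaining a second isometry, and establish the inversion relations $\mathcal{F}^*\!\circ\mathcal{F}=I$ and $\mathcal{F}\circ\mathcal{F}^*=I$ on their extensions via the same Gaussian device, now in the pointwise form $\lim_{t\to 0^+}\int\mathcal{F}(f)(\xi)e^{2\pi i x\xi}e^{-\pi t\xi^2}\,{\rm d}\xi=f(x)$ for $f$ in a dense class. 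A linear isometry possessing a two-sided inverse is onto, and by polarization it preserves inner products, so $f\mapsto\widehat{f}$ is a Hilbert-space isomorphism of $L^2(\mathbb{R})$ onto $L^2(\mathbb{R})$, which is the third bullet.

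The principal obstacle is the evaluation step $h(0)=\int\mathcal{F}(h)$ (and the analogous pointwise inversion needed for surjectivity), carried out without circularity. The difficulty is that $\mathcal{F}(h)$ need not be a priori integrable, so a naive inversion theorem is unavailable; the Gaussian summability argument circumvents this by inserting the convergence factor $e^{-\pi t\xi^2}$ and passing to the limit, exploiting continuity of $h$ at the relevant point together with the approximate-identity property of $G_t$. The care required lies entirely in justifying the interchange of limit and integral, which is supplied by monotone convergence under the sign condition $\mathcal{F}(h)\ge 0$ on one side and by the approximate-identity estimate on the other.
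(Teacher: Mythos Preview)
The paper does not supply its own proof of this statement; Theorem~\ref{thm:PAR} appears in the appendix purely as a quoted background result from Rudin, with no argument given. There is therefore nothing in the paper to compare your proposal against.

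That said, your proof is correct and is essentially the classical one found in the cited reference. You establish the $L^2$ isometry on the dense subspace $L^1\cap L^2$ by a summability device (you use the Gauss--Weierstrass kernel; Rudin in Chapter~9 uses an Abel--Poisson type kernel, but the structure---multiplication formula, nonnegative $\mathcal{F}(f*\tilde f)$, monotone convergence on one side, approximate-identity convergence on the other---is identical), then extend by density and recover surjectivity via the conjugate transform and Fourier inversion. The only place one might ask for a little more detail is the surjectivity step: the inversion $\mathcal{F}^*\mathcal{F}=I$ is first verified pointwise on a dense class where the relevant integrals converge (e.g.\ continuous $L^1$ functions, or Schwartz functions), and then passed to all of $L^2$ by the continuity of both extended isometries. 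You allude to this correctly, so there is no genuine gap.
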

\begin{theorem}(\cite{MR2597943}, p. 189 )\label{thm:ParsevalFormula}
Assume $u,v\in L^2(\mathbb{R}^n)$. Then
\begin{itemize}
\item $\displaystyle \int_{\mathbb{R}^n} u\overline{v}=\int_{\mathbb{R}^n} \widehat{u}\overline{\widehat{v}}.$
\item $u=(\widehat{u})^{\vee}.$
\end{itemize}
\end{theorem}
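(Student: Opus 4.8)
The plan is to derive both identities from the Plancherel Theorem (Theorem~\ref{thm:PAR}). That theorem is stated for $L^2(\mathbb{R})$, so my first move would be to record its $n$-dimensional analogue: because the kernel factors as $e^{-2\pi i x\cdot\xi}=\prod_{j=1}^n e^{-2\pi i x_j\xi_j}$, a Fubini/tensor-product argument upgrades the one-dimensional isometry to a linear isometric isomorphism $f\mapsto\widehat f$ of $L^2(\mathbb{R}^n)$ onto itself satisfying $\|f\|_2=\|\widehat f\|_2$. I would take this extended Plancherel statement as the single tool from which everything else follows.

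For the first bullet (Parseval's identity) I would reduce the inner-product equality to the norm equality by polarization. Since the transform is linear, $\widehat{u\pm v}=\widehat u\pm\widehat v$ and $\widehat{u\pm iv}=\widehat u\pm i\widehat v$, so applying $\|f\|_2=\|\widehat f\|_2$ to the four vectors $u\pm v$ and $u\pm iv$ and inserting them into the complex polarization identity
\[
\int_{\mathbb{R}^n} u\,\overline v=\frac14\Big(\|u+v\|_2^2-\|u-v\|_2^2+i\|u+iv\|_2^2-i\|u-iv\|_2^2\Big)
\]
produces the same right-hand side with every norm replaced by the norm of the corresponding transform, which is exactly $\int_{\mathbb{R}^n}\widehat u\,\overline{\widehat v}$. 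This yields $\int_{\mathbb{R}^n}u\,\overline v=\int_{\mathbb{R}^n}\widehat u\,\overline{\widehat v}$ with no further computation.

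For the second bullet (Fourier inversion) I would argue that $(\cdot)^\vee$, the inverse Plancherel transform introduced in Section~\ref{section:Notation}, is by construction the two-sided inverse of the bijection $f\mapsto\widehat f$, so $(\widehat u)^\vee=u$ is immediate. To make this concrete rather than tautological, I would first verify the classical inversion identity on a dense subspace---say the Schwartz class, or $\{f\in L^1\cap L^2:\widehat f\in L^1\}$---where the integral $\int_{\mathbb{R}^n}e^{2\pi i x\cdot\xi}\widehat f(\xi)\,{\rm d}\xi$ converges absolutely and equals $f$ by the classical theorem (with $\widehat f$ the transform of Definition~\ref{def:FT}), and then extend to all of $L^2(\mathbb{R}^n)$ by density, using that both $\widehat{\,\cdot\,}$ and $(\cdot)^\vee$ are isometries and hence continuous.

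The substantive difficulty is entirely absorbed into Theorem~\ref{thm:PAR}: extending the transform---originally defined by the convergent integral of Definition~\ref{def:FT} only on $L^1$---to all of $L^2$ and proving it is a surjective isometry is the real theorem, and I take it as given. The only genuine obstacles that remain are bookkeeping: bridging $n=1$ to general $n$ (handled by the factorization of the kernel together with Fubini), and making the inversion a bona fide $L^2$ identity rather than a pointwise one, which forces the density-and-continuity extension above instead of a naive manipulation of integrals that need not converge for general $L^2$ data.
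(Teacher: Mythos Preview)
The paper does not prove this statement at all: it appears in the appendix as a cited textbook result (Evans, p.~189) with no accompanying argument, so there is no ``paper's own proof'' to compare against. Your sketch is the standard one---polarization to pass from the isometry $\|f\|_2=\|\widehat f\|_2$ to the inner-product identity, and a density-plus-continuity extension for the inversion formula---and it is correct.
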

\begin{theorem}[\cite{MR2328004}, Theorem 4.3-2, p. 191]\label{Thm:dense}
Let $(X,(\cdot,\cdot))$ be a Hilbert space and let $Y$ be a subspace of $X$, then $\overline{Y}=X$ if and only if element $x\in X$ that satisfy $(x,y)=0$ for all $y\in Y$ is $x=0$.
\end{theorem}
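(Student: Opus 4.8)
The plan is to recast the orthogonality hypothesis in terms of the orthogonal complement $Y^{\perp}=\{x\in X:(x,y)=0 \text{ for all } y\in Y\}$, so that the condition ``every $x$ with $(x,y)=0$ for all $y\in Y$ is zero'' becomes simply $Y^{\perp}=\{0\}$. With this reformulation the theorem reduces to the equivalence $\overline{Y}=X \iff Y^{\perp}=\{0\}$, which I would establish by invoking the orthogonal decomposition (projection) theorem for Hilbert spaces.

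First I would record the elementary observation that $Y^{\perp}=\overline{Y}^{\perp}$. Indeed, $\overline{Y}^{\perp}\subseteq Y^{\perp}$ is immediate since $Y\subseteq\overline{Y}$; for the reverse inclusion, if $x\in Y^{\perp}$ and $z\in\overline{Y}$, pick $y_n\in Y$ with $y_n\to z$ and use continuity of the inner product to conclude $(x,z)=\lim_n(x,y_n)=0$. This shows the orthogonality condition is insensitive to passing to the closure, which is the technical point that drives both directions.

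For the forward direction I would assume $\overline{Y}=X$. Then $Y^{\perp}=\overline{Y}^{\perp}=X^{\perp}=\{0\}$, because any $x$ orthogonal to all of $X$ satisfies in particular $(x,x)=\|x\|^{2}=0$, forcing $x=0$. This half is immediate and uses only the closure remark above together with positive-definiteness of the inner product.

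For the converse I would assume $Y^{\perp}=\{0\}$ and set $M=\overline{Y}$, a closed subspace of $X$. The projection theorem then supplies the orthogonal decomposition $X=M\oplus M^{\perp}$, and by the closure remark $M^{\perp}=\overline{Y}^{\perp}=Y^{\perp}=\{0\}$. Hence $X=M\oplus\{0\}=M=\overline{Y}$, as required. The only non-elementary ingredient is the projection theorem, and this is precisely where completeness of $X$ enters; everything else is routine, so the one point that must be handled with care is the legitimacy of applying orthogonal decomposition to the \emph{closed} subspace $\overline{Y}$ rather than to $Y$ itself.
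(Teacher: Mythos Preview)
Your argument is correct and is the standard textbook proof via the orthogonal decomposition $X=\overline{Y}\oplus \overline{Y}^{\perp}$ together with the observation $Y^{\perp}=\overline{Y}^{\perp}$. There is nothing to compare here: the paper does not supply its own proof of this statement but simply quotes it from Tartar's book (Theorem~4.3-2, p.~191), so your write-up in fact fills in what the paper leaves as a citation.
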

\end{appendices}


\end{document}